%%%%%%%%%%%%%%%%%%%%%%%%%%%%%%%%%%%%%%%%%%%%%%%%%%%%%%%%%%%%%%%%%%%%%%%%%%%%%%%%%%%%%%%%%%%%%%%%%%%%%%%%%%%%%%
%%%%%  Cyclically presented groups and Labelled Oriented Graph groups                                    %%%%%
%%%%%  James Howie and Gerald Williams,                                                                  %%%%%
%%%%%  Department of Mathematical Sciences                                                               %%%%%
%%%%%  University of Essex, Colchester, Essex, CO4 3SQ                                                   %%%%%
%%%%%  gwill@essex.ac.uk                                                                                 %%%%%
%%%%%%%%%%%%%%%%%%%%%%%%%%%%%%%%%%%%%%%%%%%%%%%%%%%%%%%%%%%%%%%%%%%%%%%%%%%%%%%%%%%%%%%%%%%%%%%%%%%%%%%%%%%%%%

\documentclass[preprint]{elsarticle}
\usepackage{amsfonts,amsmath,amssymb,amsthm}
\usepackage{pstricks-add}
\newcommand{\Z}{\mathbb{Z}}

\newcommand{\pres}[2]{\langle {#1}\ |\ {#2} \rangle}
\newtheorem{theorem}{Theorem}
\newtheorem{lemma}[theorem]{Lemma}
\newtheorem{corollary}[theorem]{Corollary}
\newtheorem{example}[theorem]{Example}

\begin{document}
\begin{frontmatter}

\title{Tadpole Labelled Oriented Graph Groups and Cyclically Presented Groups}
\author[Howie]{James Howie}
\author[Williams]{Gerald Williams}

\address[Howie]{Department of Mathematics and the Maxwell Institute for Mathematical Sciences, Heriot-Watt University, Edinburgh EH14 4AS, U.K.}
\address[Williams]{Department of Mathematical Sciences, University of Essex, Colchester, Essex CO4 3SQ, U.K.}
\fntext[Howie]{{\tt J.Howie@hw.ac.uk}}
\fntext[Williams]{{\tt gwill@essex.ac.uk}}

\begin{abstract}
We study a class of Labelled Oriented Graph (LOG) group where the underlying graph is a tadpole graph. We show that such a group is the natural HNN extension of a cyclically presented group and investigate the relationship between the LOG group and the cyclically presented group. We relate the second homotopy groups of their presentations and show that hyperbolicity of the cyclically presented group implies solvability of the conjugacy problem for the LOG group. In the case where the label on the tail of the LOG spells a positive word in the vertices in the circuit we show that the LOGs and groups coincide with those considered by Szczepa\'{n}ski and Vesnin. We obtain new presentations for these cyclically presented groups and show that the groups of Fibonacci type introduced by Johnson and Mawdesley are of this form. These groups generalize the Fibonacci groups and the Sieradski groups and have been studied by various authors. We continue these investigations, using small cancellation and curvature methods to obtain results on hyperbolicity, automaticity, SQ-universality, and solvability of decision problems.
\end{abstract}

\begin{keyword}
Labelled Oriented Graph (LOG) group\sep HNN extension\sep cyclically presented group\sep Fibonacci group\sep small cancellation theory\sep hyperbolic group\sep SQ-universal\sep decision problems.

\MSC 20F05\sep 20F06\sep 20F10\sep 57M05\sep 20F67.

\end{keyword}
\end{frontmatter}

\section{Introduction}\label{sec:intro}

Continuing research carried out in~\cite{GilbertHowie},\cite{SV} we study a class of Labelled Oriented Graph (LOG) groups where the underlying graph is a tadpole graph. It will turn out that these groups are isomorphic to the natural HNN extension of cyclically presented groups. In Sections~\ref{sec:LOG}--\ref{sec:conjugacy} of this paper we explore the relationship between the LOG group and presentation and the cyclically presented group and its cyclic presentation. One example of a family of groups arising in this way are the \em groups of Fibonacci type \em ${H}_n(m,k)$ defined by the presentations
\[\mathcal{H}_n(m,k)=\pres{x_1,\ldots , x_n}{x_ix_{i+m}=x_{i+k}\ (1\leq i\leq n)}\]
($n\geq 2$, $1\leq m,k\leq n$, subscripts mod~$n$), and in Sections~\ref{sec:smallcanc}--\ref{sec:hyperbolicity} we study these groups and presentations. This continues research into the groups conducted since their introduction in~\cite{JohnsonMawdesley} and their (independent) re-introduction in~\cite{CHR}, where they were considered as generalizations of other classes of groups considered in the literature. They include the presentations $\mathcal{F}(2,n)=\mathcal{H}_n(1,2)$ of Conway's Fibonacci groups $F(2,n)$, the presentations $\mathcal{S}(2,n)=\mathcal{H}_n(2,1)$ of the \em Sieradski  groups \em $S(2,n)$, and more generally the presentations $\mathcal{H}(n,m)=\mathcal{H}_n(m,1)$ of the groups $H(n,m)$ considered in~\cite{GilbertHowie}. See~\cite{Wsurvey} for a survey of research on the algebraic properties of the groups $H_n(m,k)$.

\section{Preliminaries}\label{sec:prelims}

A \em Labelled Oriented Graph (LOG) \em  consists of a finite connected graph (possibly with loops and multiple edges) with vertex set $V$ and edge set $E$ together with three maps $\iota,\tau,\lambda:E \rightarrow V$ called the \em initial vertex map, terminal vertex map\em , and \em labelling map\em , respectively. The LOG determines a corresponding \em LOG presentation\em
\[\pres{V}{\tau(e)^{-1}\lambda(e)^{-1}\iota(e)\lambda(e)\ (e\in E)}.\]
A group with a LOG presentation is a \em LOG group. \em

In~\cite{GilbertHowie} a particular LOG was considered: this consisted of a directed circuit of $n$ vertices with one additional edge, incident with one vertex of the circuit and with a unique extremal vertex, such that all edges of the circuit are labelled by this extremal vertex. (That investigation was prompted by a remark of Bogley~\cite{Bogley} about the case $n=2$ and led to the first detailed considerations of the groups $H(n,m)$.) A natural way to generalize that LOG is to replace the additional edge by a directed path where the labels of the edges appearing in that path spell a word in the labels of the vertices that appear in the circuit (see Figure~\ref{fig:TadpoleLOG}) and that is the situation we consider in this paper. It turns out that when the word on the tail is a positive word the LOGs coincide with a class of LOGs considered in~\cite{SV}.

Let $w$ be an element of the free group $F_n$ on $\{x_1,\ldots ,x_n\}$ and $\theta$ be the automorphism of $F_n$ given by $\theta(x_i)=x_{i+1}$, $\theta(x_n)=x_1$. Then
\[\mathcal{G}_n(w) =\pres{x_1,\ldots ,x_n}{w,\theta(w),\ldots ,\theta^{n-1}(w)}\]
is called a \em cyclic presentation \em and the group it defines, $G_n(w)$, is a \em cyclically presented group\em. The automorphism $\theta$ of $F_n$ induces an automorphism $\phi$ of $G_n(w)$ given by $\phi(x_i)=x_{i+1}$, $\phi(x_n)=x_1$.

Following~\cite{SV} we call the semidirect product
\[\widehat{G}_n(w)=G_n(w)\rtimes_\phi \Z=G_n(w)*_\phi=\{G_n(w),t\ |\ t^{-1}gt=\phi(g), g\in G_n(w)\}\]
the \em natural HNN extension \em of $G_n(w)$.

\begin{lemma}\label{lem:HNNpres}
The natural HNN extension $\widehat{G}_n(w)$ of a cyclically presented group ${G}_n(w)$ has a presentation $\widehat{\mathcal{G}}_n(w)=\pres{a,c}{W(a,c),[a,c^n]}$
where
\[W(a,c)= w(ac^{-1},cac^{-2},c^2ac^{-3},\ldots , c^{n-1}ac^{-n}).\]
\end{lemma}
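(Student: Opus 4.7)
The plan is to start from the tautological presentation of the HNN extension and apply a sequence of Tietze transformations to reduce it to the two-generator form. Explicitly,
\[
\widehat{G}_n(w)=\pres{x_1,\ldots,x_n,t}{\theta^j(w)\ (0\leq j\leq n-1),\ t^{-1}x_it=x_{i+1}\ (1\leq i\leq n)},
\]
with subscripts taken mod $n$. My first step would be to use the stable-letter relations for $i=1,\ldots,n-1$ to eliminate $x_2,\ldots,x_n$ via $x_i=t^{-(i-1)}x_1t^{i-1}$. The remaining stable-letter relation $t^{-1}x_nt=x_1$ then collapses to $t^{-n}x_1t^n=x_1$, i.e.\ $[x_1,t^n]=1$.

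Second, I would argue that for $j\geq 1$ the shifted relations $\theta^j(w)$ are redundant, being consequences of $\theta^0(w)=w$ together with $[x_1,t^n]$. After the substitution, $\theta^j(w)=w(x_{1+j},\ldots,x_{n+j})$ with indices mod $n$, whereas $t^{-j}w\,t^j$ rewrites each $x_k$ as $t^{-(k+j-1)}x_1t^{k+j-1}$ with no cyclic reduction of the index. These two expressions agree as words in $F(x_1,t)$ except that each occurrence with $k+j>n$ differs by a conjugation by $t^n$. Once $[x_1,t^n]=1$ is imposed, $t^n$ commutes with every $x_k$ (each being a $t$-conjugate of $x_1$), so the two expressions coincide in the resulting quotient and the relations $\theta^j(w)$ for $j\geq 1$ may be deleted. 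This leaves
\[
\pres{x_1,t}{w(x_1,t^{-1}x_1t,\ldots,t^{-(n-1)}x_1t^{n-1}),\ [x_1,t^n]}.
\]

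Third, I would introduce new generators $c=t^{-1}$ and $a=x_1t^{-1}$, giving $x_1=ac^{-1}$ and $t^{-(i-1)}x_1t^{i-1}=c^{i-1}ac^{-i}$. Tietze-eliminating $x_1$ and $t$ in favour of $a$ and $c$ then rewrites the first relation as exactly $W(a,c)$, and rewrites $[x_1,t^n]$ as $[ac^{-1},c^{-n}]=[a,c^{-n}]$ (since powers of $c$ commute), which is equivalent to $[a,c^n]=1$. That yields the stated presentation $\pres{a,c}{W(a,c),[a,c^n]}$.

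The only step requiring genuine care is the second: reconciling the cyclic index inside $w$ with the unwrapped index produced by iterated conjugation by $t$. The sole role of the commutator relation $[x_1,t^n]$ is to absorb the resulting shifts by $n$, and that is precisely what makes the $n-1$ shifted relations $\theta^j(w)$ derivable from $w$ alone.
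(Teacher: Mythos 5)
Your proof is correct and follows essentially the same route as the paper's: Tietze transformations eliminating $x_2,\ldots,x_n$ in favour of $t$-conjugates of $x_1$, followed by the change of generators $c=t^{-1}$, $a=x_1c$. The only difference is that the paper writes the HNN presentation with a single relator $w$ from the outset (one relator per $\theta$-orbit suffices in a semidirect product by $\phi$), whereas you keep all $n$ shifted relators $\theta^j(w)$ and then verify their redundancy modulo $[x_1,t^n]$ --- a point the paper leaves implicit, and which you handle correctly.
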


\begin{proof}
\begin{alignat*}{1}
\widehat{G}_n(w)&= \langle x_1,\ldots ,x_n,t\ |\ x_1=t^{-1}x_nt, x_{i+1}=t^{-1}x_it\ (1\leq i \leq n-1), \\
&\qquad \qquad \qquad \qquad \qquad \qquad w(x_1,\ldots ,x_n) \rangle\\
&=\langle x_1,\ldots ,x_n,t\ |\ x_1=t^{-n}x_1t^n, x_{i+1}=t^{-i}x_1t^i, \\
&{}\qquad \qquad \qquad \qquad \qquad \qquad w(x_1,t^{-1}x_1t,t^{-2}x_1t^2, \ldots ,t^{-(n-1)}x_1t^{n-1})\rangle \\
&=\pres{x_1,t}{x_1=t^{-n}x_1t^n, w(x_1,t^{-1}x_1t,t^{-2}x_1t^2, \ldots ,t^{-(n-1)}x_1t^{n-1})}\\
&=\langle x_1,t,a,c\ |\ x_1=t^{-n}x_1t^n, w(x_1,t^{-1}x_1t,t^{-2}x_1t^2, \ldots ,t^{-(n-1)}x_1t^{n-1}),\\
&{}\qquad \qquad \qquad \qquad \qquad \qquad c=t^{-1},a=x_1c \rangle\\
&=\pres{a,c}{ac^{-1}=c^{n}ac^{-n-1}, w(ac^{-1},cac^{-2},c^2ac^{-3}, \ldots ,c^{n-1}ac^{-n})}\\
&=\pres{a,c}{W(a,c),[a,c^n]}
\end{alignat*}
\end{proof}

The defining word $w$ of $\mathcal{G}_n(w)$ is \em admissible \em if the exponent sum of its generators is $\pm 1$. Szczepa\'{n}ski and Vesnin~\cite[Theorem~1]{SV} showed that the natural HNN extension $\widehat{G}_n(w)$ satisfies the Kervaire conditions for it to be an $l$-knot group ($l\geq 3$) if and only if $w$ is admissible. In this case, by Simon~\cite{Simon}, it has a Wirtinger presentation and it is the fundamental group of the complement $S^{l+2}\backslash M^l$ where $M^l$ is a closed orientable connected tamely embedded $l$-manifold ($l\geq 2$).

Associated to a group presentation is the standard 2-complex which has a single 0-cell, 1-cells in one-to-one correspondence with the generators, and 2-cells in one-to-one correspondence with the relators. We will not distinguish between the presentation and the 2-complex, and we will talk of homotopy equivalences and homotopy groups of presentations. We will use the symbol $\simeq$ to denote homotopy equivalence.

\section{LOG groups and natural HNN extensions}\label{sec:LOG}

We consider LOGs defined as follows. Let $v$ be a freely reduced word of length $r$ in generators $a_1,\ldots ,a_n$. Then we may write
\[v(a_1,\ldots ,a_n)= a_{n-p_0}^{\delta_0} a_{n-p_1}^{\delta_1}\ldots a_{n-p_{r-1}}^{\delta_{r-1}}\]
for some $0\leq p_0,\ldots , p_{r-1} \leq n-1$, $\delta_0,\ldots ,\delta_{r-1}\in\{1,-1\}$, where $\delta_i\neq-\delta_{i+1}$ if $p_i=p_{i+1}$. (It is convenient to write the subscripts in the form $n-p_i$, rather than simply $p_i$, for reasons that will become clear in Corollary~\ref{cor:poswordcycpres}.)
Let $\Gamma(n; v(a_1,\ldots ,a_n))$ be the LOG depicted in Figure~\ref{fig:TadpoleLOG} (note that $\Gamma(n; a_{n-m})$ is the LOG considered in~\cite{GilbertHowie}).

\begin{figure}[ht]
\vspace{-3.0cm}
  \begin{center}
    \begin{minipage}[t]{0.58\linewidth}
\psset{xunit=1.1cm,yunit=1.2cm,algebraic=true,dotstyle=o,dotsize=3pt 0,linewidth=0.8pt,arrowsize=3pt 2,arrowinset=0.25}
\begin{pspicture*}(-2.2,-6.8)(19.6,5.8)
\psline(-2,1)(-1,1)
\psline(-1.4,1)(-1.5,0.9)
\psline(-1.4,1)(-1.5,1.1)
\psline(3,1)(4,1)
\psline(3.6,1)(3.5,0.9)
\psline(3.6,1)(3.5,1.1)
\psline[linestyle=dashed,dash=2pt 2pt](5,2.7)(6,3)
\psline[linestyle=dashed,dash=2pt 2pt](4.9,-0.8)(6,-1)
\psline[linestyle=dashed,dash=2pt 2pt](8,1.5)(7.7,2.4)
\psline[linestyle=dashed,dash=2pt 2pt](8,0.4)(7.6,-0.4)
\psline[linestyle=dashed,dash=2pt 2pt](-0.6,1)(0.1,1)
\psline(0.5,1)(1.5,1)
\psline(1.1,1)(1,0.9)
\psline(1.1,1)(1,1.1)
\psline[linestyle=dashed,dash=2pt 2pt](1.9,1)(2.7,1)
\psline(4.3,2)(4,1)
\psline(4.1,1.4)(4,1.5)
\psline(4.1,1.4)(4.3,1.5)
\psline(5,2.7)(4.3,2)
%\psline(4.5,2.3)(4.5,2.4)
%\psline(4.5,2.3)(4.7,2.2)
\psline(4.55,2.25)(4.55,2.45)%
\psline(4.55,2.25)(4.75,2.25)%
\psline(4.3,0)(4.9,-0.8)
\psline(4.7,-0.5)(4.5,-0.5)
\psline(4.7,-0.5)(4.7,-0.3)
\psline(4,1)(4.3,0)
\psline(4.2,0.4)(4,0.5)
\psline(4.2,0.4)(4.3,0.6)
\psline(8,0.4)(8,1.5)
\psline(8,1.1)(8.1,1)
\psline(8,1.1)(7.9,1)
\psdots[dotstyle=*,linecolor=blue](-2,1)
\rput[bl](-2.1,1.2){\blue{$t_0$}}
\psdots[dotstyle=*,linecolor=blue](-1,1)
\rput[bl](-1.1,1.2){\blue{$t_1$}}
\rput[bl](-1.7,0.3){$a_{n-p_{0}}^{\delta_{0}}$}
\psdots[dotstyle=*,linecolor=blue](3,1)
\rput[bl](2.9,1.2){\blue{$t_{r-1}$}}
\psdots[dotstyle=*,linecolor=blue](4,1)
\rput[bl](4.1,0.9){\blue{$a_1$}}
\rput[bl](2.9,0.3){$a_{n-p_{r-1}}^{\delta_{r-1}}$}
\psdots[dotstyle=*,linecolor=blue](4.3,2)
\rput[bl](4.5,1.8){\blue{$a_n$}}
\psdots[dotstyle=*,linecolor=blue](5,2.7)
\rput[bl](5.2,2.4){\blue{$a_{n-1}$}}
\psdots[dotstyle=*,linecolor=blue](4.3,0)
\rput[bl](4.4,0){\blue{$a_2$}}
\psdots[dotstyle=*,linecolor=blue](4.9,-0.8)
\rput[bl](5,-0.7){\blue{$a_3$}}
\psdots[dotstyle=*,linecolor=blue](8,0.4)
\rput[bl](7.5,0.3){\blue{$a_i$}}
\psdots[dotstyle=*,linecolor=blue](8,1.5)
\rput[bl](7.3,1.4){\blue{$a_{i+1}$}}
\psdots[dotstyle=*,linecolor=blue](0.5,1)
\rput[bl](0.4,1.2){\blue{$t_j$}}
\psdots[dotstyle=*,linecolor=blue](1.5,1)
\rput[bl](1.4,1.2){\blue{$t_{j+1}$}}
\rput[bl](0.7,0.3){$a_{n-p_{j}}^{\delta_{j}}$}
\rput[bl](3.8,1.6){$t_0$}
\rput[bl](4.2,2.5){$t_0$}
\rput[bl](4.2,-0.7){$t_0$}
\rput[bl](3.95,0.1){$t_0$}
\rput[bl](8.2,0.9){$t_0$}
\end{pspicture*}
    \end{minipage}\hfill
\vspace{-6.5cm}
\caption{LOG $\Gamma(n; v(a_1,\ldots ,a_n))$.\label{fig:TadpoleLOG}}
  \end{center}
\normalsize
\end{figure}

\begin{lemma}\label{lem:LOGpres}
The LOG presentation corresponding to $\Gamma(n; v(a_1,\ldots ,a_n))$ is homotopically equivalent to
\( \pres{a,c}{U(a,c), [a,c^n]}\)
where
\begin{alignat*}{1}
U(a,c) &=v(a,c^{-1}ac,c^{-2}ac^2,\ldots ,c^{-(n-1)}ac^{(n-1)})^{-1}c\\
&{}\qquad\qquad \cdot \ v(a,c^{-1}ac,c^{-2}ac^2,\ldots ,c^{-(n-1)}ac^{(n-1)})a^{-1}
\end{alignat*}
and where $c=t_0$ and $a=a_1$.
\end{lemma}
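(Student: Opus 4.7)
The plan is to transform the LOG presentation of $\G(n;v(a_1,\ldots,a_n))$ into $\pres{a,c}{U(a,c),[a,c^n]}$ by a sequence of elementary Tietze transformations of the form ``eliminate a generator $x$ together with a relation of the form $xw^{-1}$ in which $x$ does not appear in $w$''. Each such step is a simple homotopy equivalence of standard 2-complexes, so the desired homotopy equivalence will follow. The key observation that makes every LOG edge useful for elimination is that the relator $\tau(e)^{-1}\lambda(e)^{-1}\iota(e)\lambda(e)$ reads as $\tau(e)=\lambda(e)^{-1}\iota(e)\lambda(e)$, which is already in the form needed to remove $\tau(e)$.

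First I would deal with the circuit. From the figure, the $n$ circuit edges carry label $t_0$ and are oriented $a_i\to a_{i+1}$ (indices mod~$n$), so they give the relators $a_{i+1}=t_0^{-1}a_i t_0$ for $i=1,\ldots,n$. The first $n-1$ of these allow the successive elimination of $a_2,\ldots,a_n$, producing the formula $a_i=t_0^{-(i-1)}a_1 t_0^{i-1}$. Substituting this into the remaining (closing) circuit relator $a_1=t_0^{-1}a_n t_0$ turns it into $a_1=t_0^{-n}a_1 t_0^n$, i.e.\ $[a_1,t_0^n]$. Renaming $a_1=a$ and $t_0=c$ gives the second relator of the target presentation.

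Next I would deal with the tail. Its first $r-1$ edges give relators $t_{j+1}=a_{n-p_j}^{-\delta_j}t_j a_{n-p_j}^{\delta_j}$ for $0\leq j\leq r-2$, and the final edge (which terminates at $a_1$) gives $a_1=a_{n-p_{r-1}}^{-\delta_{r-1}}t_{r-1}a_{n-p_{r-1}}^{\delta_{r-1}}$. Writing $v_j=a_{n-p_0}^{\delta_0}\cdots a_{n-p_j}^{\delta_j}$, a short induction on $j$ using the first $r-1$ tail relators shows that $t_j=v_{j-1}^{-1}t_0 v_{j-1}$. Using these to eliminate $t_1,\ldots,t_{r-1}$ and substituting into the final tail relator gives $a_1=v_{r-1}^{-1}t_0 v_{r-1}$, where $v_{r-1}=v(a_1,\ldots,a_n)$ is the full tail word.

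Finally I would substitute the formulae $a_i=c^{-(i-1)}ac^{i-1}$ obtained in the first step to rewrite $v_{r-1}$ as $v(a,c^{-1}ac,c^{-2}ac^2,\ldots,c^{-(n-1)}ac^{n-1})$, and rearrange the relation $a=v_{r-1}^{-1}c\,v_{r-1}$ as $U(a,c)=v_{r-1}^{-1}c\,v_{r-1}\,a^{-1}$ to obtain exactly the stated presentation. The main technical obstacle, as usual with LOG calculations, is the bookkeeping in the inductive formula for $t_j$: the sign conventions $\delta_j$, the ``$n-p_j$'' subscript convention, and the order of the letters in $v_j$ all need to be threaded through consistently so that the final word is precisely the word $v$ (in the correct order) evaluated at the conjugates $c^{-(i-1)}ac^{i-1}$. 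Nothing beyond this careful bookkeeping is expected.
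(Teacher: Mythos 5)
Your proposal is correct and follows essentially the same route as the paper: eliminate $a_2,\ldots,a_n$ via the circuit relators to obtain $a_i=t_0^{-(i-1)}a_1t_0^{i-1}$ and $[a_1,t_0^n]$, eliminate $t_1,\ldots,t_{r-1}$ via the inductive formula $t_j=v_{j-1}^{-1}t_0v_{j-1}$ to reduce the tail to $a_1=v(a_1,\ldots,a_n)^{-1}t_0\,v(a_1,\ldots,a_n)$, and then substitute; the paper likewise presents this as a chain of generator-eliminating Tietze moves, each a homotopy equivalence of the associated $2$-complexes. The bookkeeping you flag (signs $\delta_j$, the $n-p_j$ indexing, the order of letters in $v_j$) checks out against the orientations in Figure~\ref{fig:TadpoleLOG}, so no gap remains.
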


\begin{proof}
The LOG presentation is
\begin{alignat*}{1}
&{}       \langle a_1,\ldots , a_n,t_0,\ldots, t_{r-1}\ |\
t_j=a_{n-p_j}^{\delta_j}t_{j+1}a_{n-p_j}^{-\delta_j}\ (j=0,\ldots ,r-2),\\
&{}\qquad \qquad \qquad \qquad t_{r-1}=a_{n-p_{r-1}}^{\delta_{r-1}}a_1a_{n-p_{r-1}}^{-\delta_{r-1}}, a_i=t_0a_{i+1}t_0^{-1}\ (i=1,\ldots ,n)
\rangle\\
&\simeq \langle a_1,\ldots , a_n,t_0,\ldots, t_{r-1}\ |\
t_{r-1}=a_{n-p_{r-1}}^{\delta_{r-1}}a_1a_{n-p_{r-1}}^{-\delta_{r-1}},\\
&{}\qquad \qquad \qquad \qquad t_{j+1}=\left(\prod_{\alpha=0}^j a_{n-p_\alpha}^{\delta_\alpha}\right)^{-1}t_0 \left(\prod_{\alpha=0}^j a_{n-p_\alpha}^{\delta_\alpha}\right) (j=0,\ldots ,r-2),\\
&{}\qquad \qquad \qquad \qquad a_i=t_0a_{i+1}t_0^{-1}\ (i=1,\ldots ,n)\rangle \\
&\simeq \langle a_1,\ldots , a_n,t_0\ |\
\left(\prod_{\alpha=0}^{r-2} a_{n-p_\alpha}^{\delta_\alpha}\right)^{-1} t_0 \left(\prod_{\alpha=0}^{r-2} a_{n-p_\alpha}^{\delta_\alpha}\right) = a_{n-p_{r-1}}^{\delta_{r-1}} a_1 a_{n-p_{r-1}}^{-\delta_{r-1}},\\
&{}\qquad \qquad \qquad \qquad a_i=t_0a_{i+1}t_0^{-1}\ (i=1,\ldots ,n)
\rangle\\
&\simeq \langle a_1,\ldots , a_n,t_0\ |\
\left(\prod_{\alpha=0}^{r-1} a_{n-p_\alpha}^{\delta_\alpha}\right)^{-1} t_0 \left(\prod_{\alpha=0}^{r-1} a_{n-p_\alpha}^{\delta_\alpha}\right) = a_1,\\
&{}\qquad \qquad \qquad \qquad a_i=t_0a_{i+1}t_0^{-1}\ (i=1,\ldots ,n)
\rangle\\
&\simeq \langle a_1,\ldots , a_n,t_0\ |\
v(a_1,\ldots ,a_n)^{-1} t_0 v(a_1,\ldots ,a_n) = a_1,\\
&{}\qquad \qquad \qquad \qquad a_i=t_0a_{i+1}t_0^{-1}\ (i=1,\ldots ,n)
\rangle\displaybreak[2]\\
&\simeq \langle a_1,\ldots , a_n,t_0\ |\
v(a_1,\ldots ,a_n)^{-1} t_0 v(a_1,\ldots ,a_n) = a_1,\\
&{}\qquad \qquad \qquad \qquad a_i=t_0^{-(i-1)}a_1t_0^{i-1}\ (i=2,\ldots , n), a_1=t_0^{-n}a_1t_0^n
\rangle\\
&\simeq \pres{a,c}{U(a,c), [a,c^n]}
\end{alignat*}
\end{proof}

In the corollary we consider the case when $v$ is a positive word; that is $\delta_0=\cdots =\delta_{r-1}=1$ so  $v(a_1,\ldots ,a_n)= a_{n-p_0}a_{n-p_1}\ldots a_{n-p_{r-1}}$.
\begin{corollary}\label{cor:poswordcycpres}
The LOG group corresponding to $\Gamma(n; a_{n-p_0}a_{n-p_1}\ldots a_{n-p_{r-1}})$ has a presentation
\begin{alignat*}{1}
\pres{a,c}{\left( \prod_{i=0}^{r-1} ac^{p_{i+1}-p_i} \right)^{-1} c \left( \prod_{i=0}^{r-1} ac^{p_{i+1}-p_i} \right)a^{-1}, [a,c^n]}
\end{alignat*}
and is isomorphic to the natural HNN extension of
\begin{alignat*}{1}
 G_n((x_{p_0}x_{p_1+1}\ldots x_{p_{r-1}+(r-1)} x_{p_{r}+r}) (x_{p_0+1}x_{p_1+2}\ldots x_{p_{r-1}+r} )^{-1} )\label{eq:poswordCPgp}
\end{alignat*}
where $p_r=-1$.
\end{corollary}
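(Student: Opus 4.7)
The plan is to begin with the presentation $\langle a,c\mid U(a,c),[a,c^n]\rangle$ of the LOG group supplied by Lemma~\ref{lem:LOGpres}, simplify $U$ when $v$ is a positive word, and then compare the result with the natural HNN presentation of Lemma~\ref{lem:HNNpres} applied to the proposed cyclic presentation.

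For the first assertion, substituting $v(a_1,\ldots,a_n)=a_{n-p_0}\cdots a_{n-p_{r-1}}$ with $a_i\mapsto c^{-(i-1)}ac^{i-1}$ produces the telescoping product $\prod_{i=0}^{r-1} c^{-e_i} a c^{e_i}$ where $e_i=n-p_i-1$. The internal $c$-powers collapse to $e_i-e_{i+1}=p_{i+1}-p_i$, and using the convention $p_r=-1$ a short calculation shows this product equals $c^{p_0+1-n}\,P\,c^n$ in the free group, where $P=\prod_{i=0}^{r-1}ac^{p_{i+1}-p_i}$. Since $[a,c^n]=1$ makes $c^n$ central in the presented group, the two copies of $c^n$ cancel, giving $v=c^{p_0+1}P$ in the group and hence $U=P^{-1}cPa^{-1}$. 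A Tietze transformation replaces $U$ by this expression, yielding the first stated presentation.

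For the second assertion, apply Lemma~\ref{lem:HNNpres} to the proposed cyclic word $w$ and expand $W(a,c)=w(ac^{-1},cac^{-2},\ldots,c^{n-1}ac^{-n})$ using the substitution $x_j\mapsto c^{j-1}ac^{-j}$. The two long factors of $w$ telescope in the same fashion as above: a careful index accounting shows that the first factor maps to $c^{p_0-1}Pa\,c^{1-r}$ and the second to $c^{p_0}P\,c^{1-r}$, so that
\[
W(a,c)=c^{p_0-1}\,PaP^{-1}\,c^{-p_0}.
\]
Both $W$ and the simplified relator $P^{-1}cPa^{-1}$ express, modulo cyclic rotation and inversion, the single relation $PaP^{-1}=c$; hence they have the same normal closure in $F(a,c)$. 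Therefore the presentations furnished by Lemma~\ref{lem:LOGpres} and by Lemma~\ref{lem:HNNpres} (applied to our $w$) coincide, identifying the LOG group with the stated natural HNN extension.

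The main obstacle is purely computational: tracking the exponent shifts carefully through both telescoping expansions and verifying at the boundary that the convention $p_r=-1$ absorbs the leftover $c$-powers correctly. Once this is done, the equivalence of the two relators is a cyclic rearrangement and no further ideas are required.
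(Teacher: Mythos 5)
Your proposal is correct and follows essentially the same route as the paper: telescope the positive word $v$ under $a_i\mapsto c^{-(i-1)}ac^{i-1}$ (using $[a,c^n]$) to get $U=P^{-1}cPa^{-1}$, then expand $W(a,c)$ from Lemma~\ref{lem:HNNpres} for the proposed $w$ and observe it is conjugate, up to inversion, to the same relator. Your computed value $W=c^{p_0-1}PaP^{-1}c^{-p_0}$ is the correct one (the paper's final displayed line shows $c^{1-p_0}$, apparently dropping the trailing $c^{-1}$ from $\theta(w')^{-1}$), so no changes are needed.
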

\begin{proof}
In the notation of Lemma~\ref{lem:LOGpres}
\begin{alignat*}{1}
 v(a_1,\ldots , a_n) &= \prod_{i=0}^{r-1} a_{n-p_i}
\intertext{so}
 v(a,c^{-1}ac, \ldots ,c^{-(n-1)}ac^{n-1}) &= \prod_{i=0}^{r-1} c^{-(n-p_i-1)}ac^{n-p_i-1}\\
 &=\prod_{i=0}^{r-1} c^{(p_i+1)}ac^{-(p_i+1)}\\
 &=c^{p_0+1}\prod_{i=0}^{r-1}ac^{p_{i+1}-p_i}
\end{alignat*}
(using the fact that $c^{-n}ac^n=a$) and the presentation of the LOG group follows.

Let
\[w(x_1,\ldots ,x_n)= (x_{p_0}x_{p_1+1}\ldots x_{p_{r-1}+(r-1)} x_{p_{r}+r}) (x_{p_0+1}x_{p_1+2}\ldots x_{p_{r-1}+r} )^{-1}.\]
Then this is equal to
$w'(x_1,x_2,\ldots ,x_{n-1},x_n) x_{p_{r}+r} w'(x_2,x_3,\ldots ,x_{n},x_1)^{-1}$ where
\[w'(x_1,x_2,\ldots ,x_{n-1},x_n) = x_{p_0}x_{p_1+1}\ldots x_{p_{r-1}+(r-1)}\]
and so
\begin{alignat*}{1}
 &{} w(ac^{-1},cac^{-2},c^2ac^{-3},\ldots , c^{n-1}ac^{-n}) \\
 &= w'(ac^{-1},cac^{-2},c^2ac^{-3},\ldots , c^{n-1}ac^{-n}) c^{p_r+r-1}ac^{-(p_r+r)}\\
 &{}\qquad \qquad \qquad  \cdot \ w'(cac^{-2},c^2ac^{-3},c^3ac^{-4},\ldots , c^{n}ac^{-(n+1)})^{-1}\\
 &= w'(ac^{-1},cac^{-2},c^2ac^{-3},\ldots , c^{n-1}ac^{-n}) c^{p_r+r-1}ac^{-(p_r+r-1)} \\
 &{}\qquad \qquad \qquad  \cdot \ w'(ac^{-1},cac^{-2},c^2ac^{-3},\ldots , c^{n-1}ac^{-n})^{-1}c^{-1}.
\end{alignat*}
Now
\begin{alignat*}{1}
&{} w'(ac^{-1},cac^{-2},c^2ac^{-3},\ldots , c^{n-1}ac^{-n})\\
&=
c^{p_0-1}ac^{p_1-p_0}ac^{p_2-p_1}\ldots ac^{p_{r-1}-p_{r-2}}ac^{-p_{r-1}-(r-1)}\\
&= c^{p_0-1}\left(\prod_{i=1}^r ac^{p_i-p_{i-1}} \right) c^{-(r-2)}
\end{alignat*}
so
\begin{alignat*}{1}
 &{} w(ac^{-1},cac^{-2},c^2ac^{-3},\ldots , c^{n-1}ac^{-n}) \\
 &= c^{p_0-1}\left(\prod_{i=1}^r ac^{p_i-p_{i-1}} \right)
 c^{-r+2} c^{p_r+r-1}ac^{-p_r-r+1}
 c^{r-2}\left(\prod_{i=1}^r ac^{p_i-p_{i-1}} \right) ^{-1}c^{1-p_0}\\
 &= c^{p_0-1}\left(\prod_{i=1}^r ac^{p_i-p_{i-1}} \right)
 c^{p_r+1}ac^{-(p_r+1)}
 \left(\prod_{i=1}^r ac^{p_i-p_{i-1}} \right) ^{-1}c^{1-p_0}\\
 &= c^{p_0-1}\left(\prod_{i=1}^r ac^{p_i-p_{i-1}} \right)
 a
 \left(\prod_{i=1}^r ac^{p_i-p_{i-1}} \right) ^{-1}c^{1-p_0}
\end{alignat*}
(since $p_r=-1$), and this is a conjugate of the first relator in the LOG presentation, so an application of Lemma~\ref{lem:HNNpres} completes the proof.
\end{proof}

We shall give the following notation to the presentation in Corollary~\ref{cor:poswordcycpres}
\begin{alignat*}{1}
&{}\mathcal{L}_n(p_0,p_1,\ldots ,p_{r-1})\\
&=\mathcal{G}_n((x_{p_0}x_{p_1+1}\ldots x_{p_{r-1}+(r-1)} x_{p_{r}+r}) (x_{p_0+1}x_{p_1+2}\ldots x_{p_{r-1}+r} )^{-1} )
\end{alignat*}
and write $L_n(p_0,p_1,\ldots ,p_{r-1})$ for the group it defines. In~\cite{SV} the following family of cyclic presentations
\begin{alignat*}{1}
\mathcal{G}_{n,k}^r(\overline{q};\overline{\epsilon})&=\mathcal{G}_{n,k}^r(q_1,\ldots ,q_r;\epsilon_0,\ldots ,\epsilon_r)\\
 &=  \mathcal{G}_n(x_1^{\epsilon_0}x_{1+q_1}^{\epsilon_1}\ldots x_{1+q_{r-1}}^{\epsilon_{r-1}} x_{1+q_{r}}^{\epsilon_{r}}(x_{1+k}^{\epsilon_0}x_{1+k+q_1}^{\epsilon_1}\ldots x_{1+k+q_{r-1}}^{\epsilon_{r-1}} )^{-1})
\end{alignat*}
and the groups ${G}_{n,k}^r(\overline{q};\overline{\epsilon})$ they define were introduced (where $n\geq 2$, $r\geq 1$, $k,q_1,\ldots , q_r,\epsilon_0,\epsilon_1,\ldots ,\epsilon_r$ are integers). Amongst others, the groups $H_n(m,k)=G_{n,k}^1(m;1,1)$ are given there as examples of groups belonging to this family. The subfamily $G_{n,1}^r(\overline{q};\overline{1})=G_{n,1}^r(q_1,\ldots ,q_r;1,\ldots,1)$ was singled out for special attention in that paper as their natural HNN extensions are LOG groups. Actually, this subfamily coincides with our presentations $\mathcal{L}_n(p_0,p_1,\ldots ,p_{r-1})$. To see this, note
\begin{alignat*}{1}
&{} \mathcal{G}_{n,1}^r(1+(p_1-p_0),2+(p_2-p_0),\ldots ,(r-1)+(p_{r-1}-p_0),r+p_r-p_0;\overline{1})\\
&= \mathcal{G}_n((x_{1}x_{2+p_1-p_0}\ldots x_{r+p_{r-1}-p_0} x_{r+1+p_{r}-p_0})
(x_{2}x_{3+p_1-p_0}\ldots x_{r+1+p_{r-1}-p_0)} )^{-1} )\\
&= \mathcal{G}_n((x_{p_0}x_{p_1+1}\ldots x_{p_{r-1}+(r-1)} x_{p_{r}+r}) (x_{p_0+1}x_{p_1+2}\ldots x_{p_{r-1}+r} )^{-1} )\\
&=\mathcal{L}_n(p_0,\ldots ,p_{r-1})
\end{alignat*}
and
\begin{alignat*}{1}
&{} \mathcal{L}_n((r-1)-q_r,q_1-1+(r-1)-q_r,q_2-2+(r-1)-q_r,\ldots , \\
&{}\qquad \qquad \qquad \qquad q_{r-1}-(r-1)+(r-1)-q_r)\\
&= \mathcal{G}_n( (x_{(r-1)-q_r}x_{q_1+(r-1)-q_r}x_{q_2+(r-1)-q_r}\ldots x_{q_{r-1}+(r-1)-q_r} x_{p_r+r})\\
&{}\qquad \qquad \qquad \qquad \cdot\ (x_{r-q_r}x_{q_1+r-q_r}\ldots x_{q_{r-1}+r-q_r} )^{-1})\\
&= \mathcal{G}_n( (x_{1}x_{q_1+1}x_{q_2+1}\ldots x_{q_{r-1}+1} x_{1+q_r})
(x_{2}x_{q_1+2}x_{q_2+2}\ldots x_{q_{r-1}+2} )^{-1})\\
&= \mathcal{G}_{n,1}^r(q_1,q_2,\ldots ,q_r;\overline{1}).
\end{alignat*}

Similarly, when $\delta_0=\cdots =\delta_{r-1}=1$ the LOG in Figure~\ref{fig:TadpoleLOG} can be obtained from that in~\cite[Figure~1]{SV} by setting
$q_j = p_j-p_0+j$ $(j=1,\ldots ,r)$, $b_i=a_{n-p_0+(r-1)-j+1}$, $(i=1,\ldots ,n)$ and the LOG in~\cite[Figure~1]{SV} can be obtained from that in  Figure~\ref{fig:TadpoleLOG} by setting $p_j= n-j-(q_r-q_j)+r-1$ $(j=0,\ldots , r-1)$, $a_i=b_{q_r-(i-1)}$ $(i=1,\ldots ,n)$, and $\delta_0=\cdots =\delta_{r-1}=1$.
Clearly, and as observed in~\cite{SV}, $\mathcal{G}_{n,k}^r(q_1,\ldots ,q_r;\epsilon_0,\ldots ,\epsilon_r)$ has an admissible defining word if and only if $\epsilon_r=\pm 1$, in which case without loss of generality we may take $\epsilon_r=1$. For such cases we give, in the next theorem, an alternative cyclic presentation.

\begin{theorem}\label{thm:newSVpres}
\begin{alignat*}{1}
&{} \mathcal{G}_{n,k}^r(q_1,\ldots ,q_{r-1},q_r;\epsilon_0,\ldots ,\epsilon_{r-1},1)\\
&\simeq
\mathcal{G}_n( (y_1^{-1}y_{1+k})^{\epsilon_0} (y_{1+q_1}^{-1}y_{1+q_1+k})^{\epsilon_1} \ldots (y_{1+q_{r-1}}^{-1}y_{1+q_{r-1}+k})^{\epsilon_{r-1}} y_{1+q_r}^{-1}    )
\end{alignat*}
and, in particular,
\begin{alignat*}{1}
&{} \mathcal{L}_n(p_0,p_1,\ldots ,p_{r-1}) \simeq \\
&{} \mathcal{G}_n( (y_1^{-1}y_2)(y_{2+(p_1-p_0)}^{-1}y_{3+(p_1-p_0)})(y_{3+(p_2-p_0)}^{-1}y_{4+(p_3-p_0)})
\ldots \\
&{}\qquad \qquad \qquad \qquad (y_{r+(p_{r-1}-p_0)}^{-1}y_{r+1+(p_{r-1}-p_0)}) y_{1+r+(p_r-p_0)}^{-1})
\end{alignat*}
where $p_r=-1$.
\end{theorem}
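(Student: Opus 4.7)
The approach is to construct the homotopy equivalence via a sequence of Tietze transformations, driven by a factorisation of the defining word. Writing $u=u(x_1,\ldots,x_n)=x_1^{\epsilon_0}x_{1+q_1}^{\epsilon_1}\cdots x_{1+q_{r-1}}^{\epsilon_{r-1}}$, the defining word of $\mathcal{G}_{n,k}^r(\overline{q};\overline{\epsilon})$ (with $\epsilon_r=1$) factors as
\[
w \;=\; u\cdot x_{1+q_r}\cdot \theta^k(u)^{-1},
\]
so the $j$-th cyclic shift is $w_j = \theta^{j-1}(u)\cdot x_{j+q_r}\cdot \theta^{j-1+k}(u)^{-1}$.

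The first step is to introduce new generators $y_1,\ldots,y_n$ together with defining relators $R_j = y_j^{-1}\theta^{j-1}(u)$, via $n$ successive Tietze transformations. Using $R_j$ and $R_{j+k}$ to rewrite $w_j$ yields the equivalent relator $\tilde w_j = y_j\,x_{j+q_r}\,y_{j+k}^{-1}$. Next, each $\tilde w_j$ is used as an elimination Tietze move, setting $x_{j+q_r} = y_j^{-1}y_{j+k}$; since $j\mapsto j+q_r$ is a bijection of $\Z/n\Z$, every $x_i$ is removed together with the corresponding $\tilde w_j$, leaving a presentation on the $n$ generators $y_1,\ldots,y_n$ and $n$ relators $\tilde R_j$ obtained from $R_j$ by the same substitution.

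Substituting $x_i = y_{i-q_r}^{-1}y_{i-q_r+k}$ into $R_j$, then reindexing $j\mapsto j+q_r$ and cyclically conjugating, presents $\tilde R_j$ as the $j$-th shift of
\[
w' \;=\; (y_1^{-1}y_{1+k})^{\epsilon_0}(y_{1+q_1}^{-1}y_{1+q_1+k})^{\epsilon_1}\cdots (y_{1+q_{r-1}}^{-1}y_{1+q_{r-1}+k})^{\epsilon_{r-1}}\, y_{1+q_r}^{-1},
\]
which is the claimed cyclic presentation. The ``in particular'' statement follows by specialising to $k=1$, $\epsilon_i=1$, and $q_i = i+(p_i-p_0)$, and invoking the identification of $\mathcal{L}_n(p_0,\ldots,p_{r-1})$ with the SV family $\mathcal{G}_{n,1}^r(\overline{q};\overline{1})$ established earlier.

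The main obstacle is not conceptual but organisational: one must verify that each Tietze move is valid (in particular that the $x$-elimination uses one relator per $x$, bijectively, which is where $\epsilon_r=\pm 1$ is essential), and that the reindexing together with cyclic conjugation correctly identifies the $n$ surviving relators $\tilde R_j$ with the $n$ cyclic shifts of the single word $w'$. Once that bookkeeping is in place, the proof consists of the chain of $\simeq$'s supplied by the Tietze transformations.
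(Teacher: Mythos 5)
Your proposal is correct and follows essentially the same route as the paper: the paper likewise introduces generators $y_i$ equal to the prefix words $\theta^{i-1}(u)$, rewrites each relator as $y_i x_{i+q_r}=y_{i+k}$, and then eliminates the $x$'s via $x_{i+q_r}=y_i^{-1}y_{i+k}$ (using the bijection $i\mapsto i+q_r$ of $\Z/n\Z$), obtaining exactly the cyclic presentation in $y_1,\ldots,y_n$. The only difference is presentational: the paper writes out the chain of homotopy-equivalent presentations explicitly rather than describing the moves, and it specialises the second claim by the same substitution you indicate.
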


\begin{proof}
\begin{alignat*}{1}
&{}\mathcal{G}_{n,k}^r(q_1,\ldots ,q_{r-1},q_r;\epsilon_0,\ldots ,\epsilon_{r-1},1)\\
&\simeq\pres{x_1\ldots , x_n}{x_i^{\epsilon_0}x_{i+q_1}^{\epsilon_1}\ldots x_{i+q_{r}}^{1} = x_{i+k}^{\epsilon_0}x_{i+k+q_1}^{\epsilon_1}\ldots x_{i+k+q_{r-1}}^{\epsilon_{r-1}}\ (1\leq i \leq n)}\\
&\simeq\pres{x_1\ldots , x_n,y_1,\ldots ,y_n}{y_i=x_i^{\epsilon_0}x_{i+q_1}^{\epsilon_1}\ldots x_{i+q_{r-1}}^{\epsilon_{r-1}},\ y_ix_{i+q_r}=y_{i+k}\ (1\leq i \leq n)}\\
&\simeq \langle x_1\ldots , x_n,y_1,\ldots ,y_n\ |\ y_{i+q_r}=x_{i+q_r}^{\epsilon_0}x_{i+q_1+q_r}^{\epsilon_1}\ldots x_{i+q_{r-1}+q_r}^{\epsilon_{r-1}},\\
&{}\qquad \qquad \qquad \qquad \qquad \qquad  x_{i+q_r}=y_{i}^{-1}y_{i+k}\ (1\leq i \leq n)\rangle \\
&\simeq\langle y_1,\ldots ,y_n\ |\ y_{i+q_r}=(y_{i}^{-1}y_{i+k})^{\epsilon_0}(y_{i+q_1}^{-1}y_{i+q_1+k})^{\epsilon_1}\ldots (y_{i+q_{r-1}}^{-1}y_{i+q_{r-1}+k})^{\epsilon_{r-1}}\\
&{}\qquad \qquad \qquad \qquad \qquad \qquad   (1\leq i \leq n)\rangle.
\end{alignat*}
\end{proof}

\begin{corollary}\label{cor:CHRLOG}
The natural HNN extension $\widehat{H}_n(m,k)$ of $H_n(m,k)$ is the LOG group corresponding to the LOG in Figure~\ref{fig:CHRLOG}.
\end{corollary}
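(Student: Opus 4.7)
The plan is to recognise $\widehat{H}_n(m,k)$ as a LOG group by expressing $\mathcal{H}_n(m,k)$ as a specialisation of an $\mathcal{L}_n$ presentation, then invoking Corollary~\ref{cor:poswordcycpres}. First I would apply Theorem~\ref{thm:newSVpres} to $\mathcal{H}_n(m,k) = \mathcal{G}_{n,k}^1(m;1,1)$ (so $r=1$, $q_1=m$, $\epsilon_0=\epsilon_1=1$) to obtain the three-letter cyclic presentation
\[\mathcal{H}_n(m,k) \simeq \mathcal{G}_n(y_1^{-1} y_{1+k} y_{1+m}^{-1}).\]
The task is then to produce a tadpole LOG whose associated positive-word $\mathcal{L}_n$ presentation reduces to this word.

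Next I would specialise the second formula of Theorem~\ref{thm:newSVpres} by taking $p_0 = p_1 = \cdots = p_{r-1} = p$. Each middle factor $(y_{j+(p_{j-1}-p_0)}^{-1}\, y_{j+1+(p_{j-1}-p_0)})$ then collapses to $(y_j^{-1} y_{j+1})$, and the $r$-fold product telescopes to $y_1^{-1} y_{r+1}$. Appending the final letter $y_{1+r+(p_r-p_0)}^{-1}$ and using $p_r = -1$ leaves the defining word $y_1^{-1} y_{r+1} y_{r-p}^{-1}$. Choosing $r=k$ and $p = k-1-m$ (reduced modulo $n$) makes this identical to $y_1^{-1} y_{1+k} y_{1+m}^{-1}$, so
\[\mathcal{L}_n(p,p,\ldots,p) \simeq \mathcal{H}_n(m,k),\]
with $k$ copies of $p = k-1-m$.

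Finally I would invoke Corollary~\ref{cor:poswordcycpres}, which identifies $\widehat{L_n(p,\ldots,p)}$ with the LOG group of the tadpole $\Gamma(n; a_{n-p}^k) = \Gamma(n; a_{n+m-k+1}^k)$ (subscript mod $n$), whose tail consists of $k$ identically labelled edges; this is the LOG drawn in Figure~\ref{fig:CHRLOG}. Since homotopy equivalent presentations define isomorphic groups, $L_n(p,\ldots,p) \cong H_n(m,k)$, so their natural HNN extensions coincide and the corollary follows. The one step I expect to require care is the telescoping together with the clean substitution of $p_r = -1$ in the trailing letter; after that everything reduces to arithmetic modulo $n$ and verification that Figure~\ref{fig:CHRLOG} indeed depicts $\Gamma(n; a_{n+m-k+1}^k)$.
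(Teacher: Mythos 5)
Your proposal is correct and follows the paper's own route: the paper's entire proof is the one-line identity $\mathcal{L}_n(\underbrace{(k-m)-1,\ldots,(k-m)-1}_{k}) = \mathcal{H}_n(m,k)$, which is exactly the identification you establish (with $p=k-m-1$ and $r=k$) by specialising Theorem~\ref{thm:newSVpres} and telescoping, before invoking Corollary~\ref{cor:poswordcycpres}. Your version merely makes explicit the verification the paper leaves to the reader, including the check that the tail labels $a_{n-p}=a_{m-k+1}$ match Figure~\ref{fig:CHRLOG}.
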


\begin{proof}
\( \mathcal{L}_n( \underbrace{(k-m)-1,\ldots ,(k-m)-1}_k )) = \mathcal{H}_n(m,k). \)
\end{proof}

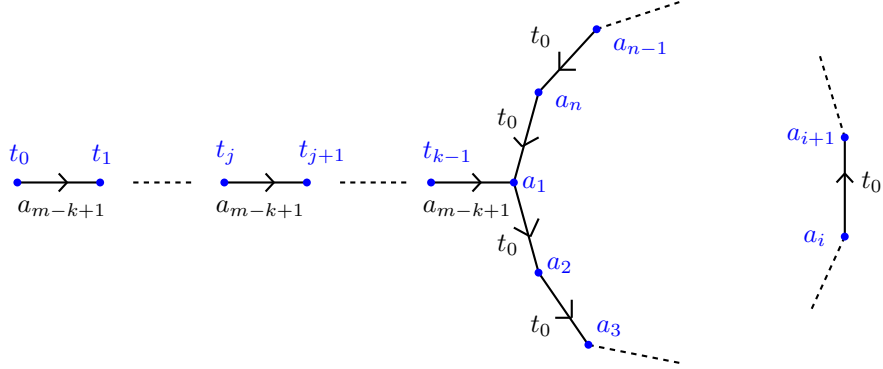
\begin{figure}[ht]
\vspace{-3.0cm}
  \begin{center}
    \begin{minipage}[t]{0.58\linewidth}
\psset{xunit=1.1cm,yunit=1.2cm,algebraic=true,dotstyle=o,dotsize=3pt 0,linewidth=0.8pt,arrowsize=3pt 2,arrowinset=0.25}
\begin{pspicture*}(-2.2,-6.8)(19.6,5.8)
\psline(-2,1)(-1,1)
\psline(-1.4,1)(-1.5,0.9)
\psline(-1.4,1)(-1.5,1.1)
\psline(3,1)(4,1)
\psline(3.6,1)(3.5,0.9)
\psline(3.6,1)(3.5,1.1)
\psline[linestyle=dashed,dash=2pt 2pt](5,2.7)(6,3)
\psline[linestyle=dashed,dash=2pt 2pt](4.9,-0.8)(6,-1)
\psline[linestyle=dashed,dash=2pt 2pt](8,1.5)(7.7,2.4)
\psline[linestyle=dashed,dash=2pt 2pt](8,0.4)(7.6,-0.4)
\psline[linestyle=dashed,dash=2pt 2pt](-0.6,1)(0.1,1)
\psline(0.5,1)(1.5,1)
\psline(1.1,1)(1,0.9)
\psline(1.1,1)(1,1.1)
\psline[linestyle=dashed,dash=2pt 2pt](1.9,1)(2.7,1)
\psline(4.3,2)(4,1)
\psline(4.1,1.4)(4,1.5)
\psline(4.1,1.4)(4.3,1.5)
\psline(5,2.7)(4.3,2)
%\psline(4.5,2.3)(4.5,2.4)
%\psline(4.5,2.3)(4.7,2.2)
\psline(4.55,2.25)(4.55,2.45)%
\psline(4.55,2.25)(4.75,2.25)%
\psline(4.3,0)(4.9,-0.8)
\psline(4.7,-0.5)(4.5,-0.5)
\psline(4.7,-0.5)(4.7,-0.3)
\psline(4,1)(4.3,0)
\psline(4.2,0.4)(4,0.5)
\psline(4.2,0.4)(4.3,0.6)
\psline(8,0.4)(8,1.5)
\psline(8,1.1)(8.1,1)
\psline(8,1.1)(7.9,1)
\psdots[dotstyle=*,linecolor=blue](-2,1)
\rput[bl](-2.1,1.2){\blue{$t_0$}}
\psdots[dotstyle=*,linecolor=blue](-1,1)
\rput[bl](-1.1,1.2){\blue{$t_1$}}
\rput[bl](-2.0,0.6){$a_{m-k+1}$}
\psdots[dotstyle=*,linecolor=blue](3,1)
\rput[bl](2.9,1.2){\blue{$t_{k-1}$}}
\psdots[dotstyle=*,linecolor=blue](4,1)
\rput[bl](4.1,0.9){\blue{$a_1$}}
\rput[bl](2.9,0.6){$a_{m-k+1}$}
\psdots[dotstyle=*,linecolor=blue](4.3,2)
\rput[bl](4.5,1.8){\blue{$a_n$}}
\psdots[dotstyle=*,linecolor=blue](5,2.7)
\rput[bl](5.2,2.4){\blue{$a_{n-1}$}}
\psdots[dotstyle=*,linecolor=blue](4.3,0)
\rput[bl](4.4,0){\blue{$a_2$}}
\psdots[dotstyle=*,linecolor=blue](4.9,-0.8)
\rput[bl](5,-0.7){\blue{$a_3$}}
\psdots[dotstyle=*,linecolor=blue](8,0.4)
\rput[bl](7.5,0.3){\blue{$a_i$}}
\psdots[dotstyle=*,linecolor=blue](8,1.5)
\rput[bl](7.3,1.4){\blue{$a_{i+1}$}}
\psdots[dotstyle=*,linecolor=blue](0.5,1)
\rput[bl](0.4,1.2){\blue{$t_j$}}
\psdots[dotstyle=*,linecolor=blue](1.5,1)
\rput[bl](1.4,1.2){\blue{$t_{j+1}$}}
\rput[bl](0.4,0.6){$a_{m-k+1}$}
\rput[bl](3.8,1.6){$t_0$}
\rput[bl](4.2,2.5){$t_0$}
\rput[bl](4.2,-0.7){$t_0$}
\rput[bl](3.8,0.2){$t_0$}
\rput[bl](8.2,0.9){$t_0$}
\end{pspicture*}
    \end{minipage}\hfill
\vspace{-6.5cm}
\caption{LOG corresponding to $\widehat{H}_n(m,k)$.\label{fig:CHRLOG}}
  \end{center}
\normalsize
\end{figure}

In fact, in Figure~\ref{fig:CHRLOG} the tail may be extended to contain $K+1$ vertices for any $K\equiv k$~mod~$n$; furthermore (and as expected) restricting to $k=1$ gives the LOG in~\cite{GilbertHowie} for the natural HNN extension $\widehat{H}(n,m)$ of $H(n,m)$.

There are other examples of cyclically presented groups considered in the literature whose natural HNN extensions are LOG groups. For instance, in~\cite[Example~3(4)]{SV} it was shown that the generalised Sieradski groups $S(r,n)=G_n((x_1x_3\ldots x_{2r-1})(x_2x_4\ldots x_{2r-2})^{-1})$ of~\cite{CHK} are such examples. We close this section by using the presentation of $\mathcal{L}_n(p_0,p_1,\ldots ,p_{r-1}) $ obtained in Theorem~\ref{thm:newSVpres} to exhibit further examples.\\

\begin{example}

\noindent \em The idea used in Corollary~\ref{cor:CHRLOG} can be extended to show that the cyclic presentations
\begin{alignat}{1}
\mathcal{G}_n(y_1^{-1}y_{1+l_1}y_{1+l_2}^{-1}y_{1+l_3}y_{1+l_4}^{-1}y_{1+l_5}\ldots y_{1+l_{s-2}}^{-1}y_{1+l_{s-1}}y_{1+l_s}^{-1})\label{eq:altsigncycypres}
\end{alignat}
($1\leq l_i\leq n$ for each $1\leq i\leq s$) belong to the family $\mathcal{L}_n(p_0,p_1,\ldots ,p_{r-1}) $.

\begin{itemize}
  \item[1.] The groups $G_1(n),H_1(n)$ from the ``certain cyclically presented groups'' series of papers, started in~\cite{JKO} have presentations of the form~(\ref{eq:altsigncycypres}):
      \begin{alignat*}{1}
        G_1(n)&=G_n(x_1^{-1}x_2x_1^{-1}x_2x_nx_1^{-1}x_n)\\
        &=G_n(x_1^{-1}x_nx_1^{-1}x_nx_{n-1}^{-1}x_nx_{n-1}^{-1}),\\
        H_1(n)&=G_n(x_1^{-1}x_2x_1^{-1}x_2x_1^{-2}x_nx_1^{-1}x_n)\\
        &= G_n( x_1^{-1}x_nx_1^{-1}x_nx_1^{-1}x_2x_1^{-1}x_2x_1^{-1}).
      \end{alignat*}

  \item[2.] The groups $G(l,n)$ of~\cite{YangkokKim} also have presentations of the form~(\ref{eq:altsigncycypres}). When $l\geq 2$ is even
     \begin{alignat*}{1}
G(l,n)&=G_n(x_1^{-1}(x_2x_1^{-1})^{(l-2)/2}x_2(x_nx_1^{-1})^{(l-2)/2}x_n)\\
&=G_n((x_1^{-1}x_2)^{(l-2)/2}x_1^{-1}x_2(x_3^{-1}x_2)^{(l-2)/2}x_3^{-1}),\\
\intertext{and when $l\geq 3$ is odd}
G(l,n)&=G_n(x_1(x_2^{-1}x_1)^{(l-1)/2}(x_n^{-1}x_1)^{(l-1)/2})\\
&= G_n( (x_1^{-1}x_n)^{(l-1)/2} (x_1^{-1}x_2)^{(l-1)/2}x_1^{-1}).
      \end{alignat*}
\end{itemize}\em
\end{example}

\section{Cyclically presented groups and their natural HNN extensions}

Presentations of the form
\[ \pres{a,c}{U(a,c),[a,c^n]}\]
in which $a$ appears in $U(a,c)$ with exponent sum $+1$ arise as presentations of natural HNN extensions of cyclically presented groups with admissible defining words (see Section~\ref{sec:prelims}) and as LOG presentations of the LOGs considered in Section~\ref{sec:LOG}. By applying a change of generator of the form $a\mapsto ac^\gamma$ we may further assume that $c$ appears in $U(a,c)$ with exponent sum zero. The following theorem examines these presentations and the groups they define in more detail.

\begin{theorem}\label{thm:exactseq}
Let $X$ be the presentation \(\pres{a,c}{U(a,c),[a,c^n]},\)
where
$U(a,c)= a^{\alpha_1}c^{\gamma_1}\ldots a^{\alpha_k}c^{\gamma_k}$ with $k\geq 1$ and $\sum_{i=1}^k \alpha_i =1$ and $\sum_{i=1}^k \gamma_i =0$. Then
\begin{enumerate}
 \item[(a)] the Hurewicz map $\pi_2(X)\rightarrow H_2(X)$ is surjective and hence the Schur multiplier $H_2(\pi_1(X))$ is trivial;
\item[(b)] the commutator subgroup $\pi_1(X)'$ has a cyclic presentation $\mathcal{G}_n(w)$ where
\[ w(x_1,\ldots,x_n)= x_1^{\alpha_1}x_{1+\gamma_1}^{\alpha_2}x_{1+\gamma_1+\gamma_2}^{\alpha_3}\ldots x_{1+\gamma_1+\cdots+\gamma_{k-1}}^{\alpha_k}\]
(which is admissible) and $\pi_1(X)\cong \widehat{G}_n(w)$;
\item[(c)] there is a short exact sequence
\begin{alignat}{1}
\{0\}\to\Z\pi_1(X)\to\pi_2(X)\to\pi_2(\mathcal{G}_n(w))\to\{0\}\label{eq:ses0}
\end{alignat}
of $\Z\pi_1(X)$-modules that splits as $\Z\pi_1(X)'$-modules;

\item[(d)] the sequence splits as a sequence of $\Z\pi_1(X)$-modules only in the trivial case where $\pi_2(\mathcal{G}_n(w))=\{0\}$.
\end{enumerate}

\end{theorem}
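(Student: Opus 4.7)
The reverse direction in (d) is free: if $\pi_2(\mathcal{G}_n(w))=\{0\}$, the sequence from (c) reads $\{0\}\to\Z\pi_1(X)\to\pi_2(X)\to\{0\}\to\{0\}$, which splits trivially. For the forward direction, my plan is to exploit the fact that the central element $c^n$ of $\pi=\pi_1(X)$ acts in qualitatively different ways on the two outer terms of the sequence, and then compare kernels of the $\Z\pi$-linear map ``multiplication by $1-c^n$'' on the two sides of a hypothetical splitting.

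The first key observation is that the relator $[a,c^n]$ puts $c^n$ in the centre of $\pi$, and under the identification $\pi\cong\widehat{G}_n(w)=G_n(w)\rtimes_\phi\Z$ supplied by (b) the element $c^n$ corresponds to $t^{\pm n}$ (the sign depending on the convention adopted for the stable letter). Since the shift automorphism $\phi$ of $G_n(w)$ has order $n$, $\phi^n=\mathrm{id}$, and so $c^n$ acts as the identity on the natural $\Z\pi$-module structure placed on $\pi_2(\mathcal{G}_n(w))$ in the sequence of (c), in which $t$ acts by $\phi_*$. Consequently multiplication by $1-c^n$ is the zero map on $\pi_2(\mathcal{G}_n(w))$.

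The second key observation is that, in sharp contrast, $1-c^n$ is not a left zero-divisor in $\Z\pi$. The right-coset decomposition $\pi=\bigsqcup_{g\in G_n(w)} g\langle c\rangle$ realises $\Z\pi=\bigoplus_{g} g\,\Z\langle c\rangle$ as a free right $\Z\langle c\rangle$-module. Because $c^n$ is central, left multiplication by $1-c^n$ preserves each summand $g\,\Z\langle c\rangle$ and acts on it as multiplication by $1-c^n$ inside the Laurent polynomial domain $\Z[c^{\pm 1}]$; since the image of $c$ in the abelianisation $\pi^{\mathrm{ab}}=\Z$ is a generator, $c$ has infinite order in $\pi$ and this operation is injective on each summand. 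Hence multiplication by $1-c^n$ is injective on $\Z\pi$, on the ambient free module $C_2(\tilde X)=(\Z\pi)^2$, and therefore on the $\Z\pi$-submodule $\pi_2(X)=\ker\partial_2$.

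If the sequence of (c) splits as $\Z\pi$-modules, then $\pi_2(X)\cong\Z\pi\oplus\pi_2(\mathcal{G}_n(w))$ as $\Z\pi$-modules; the kernel of multiplication by $1-c^n$ on the right-hand side is $0\oplus\pi_2(\mathcal{G}_n(w))=\pi_2(\mathcal{G}_n(w))$ by the two observations, whereas the corresponding kernel on the left-hand side is $\{0\}$, forcing $\pi_2(\mathcal{G}_n(w))=\{0\}$. The main point of care in carrying out this plan is verifying that the $\Z\pi$-module structure on $\pi_2(\mathcal{G}_n(w))$ appearing in (c) really is the natural one in which $t$ acts by $\phi_*$, so that $c^n$ acts trivially; that identification is the point of contact between the algebraic input from (b) and the topological construction of (c), and once it is in hand the rest of the argument is essentially formal.
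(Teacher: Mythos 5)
Your argument for part (d) is correct and is essentially the paper's own proof of that part: both arguments rest on exactly the same two pillars, namely that $c^n$ acts trivially on $Z=\mathcal{G}_n(w)$ (because the deck transformation $c$ acts on $Z$ as the shift $\phi$, which has order $n$) so that $1-c^n$ annihilates $\pi_2(Z)$ and hence the image of any splitting map $s:\pi_2(Z)\to\pi_2(X)$, while $1-c^n$ is not a zero-divisor in $\Z\pi_1(X)$ (since $c$ has infinite order, visible in the abelianisation) and $\pi_2(X)\cong H_2(\tilde X)\subseteq C_2(\tilde X)$ embeds in a free module, so $1-c^n$ acts injectively on $\pi_2(X)$. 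Your coset-decomposition justification of the non-zero-divisor claim is a slightly more explicit version of the paper's one-line appeal to the infinite order of $c$, and your phrasing via kernels of multiplication by $1-c^n$ on the two sides of a hypothetical direct-sum decomposition is just a repackaging of the paper's ``$s(\pi_2(Z))$ is an annihilated submodule of a free module, hence zero, hence $s$ injective forces $\pi_2(Z)=0$.''

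The genuine gap is that the statement you were asked to prove is the whole of Theorem~\ref{thm:exactseq}, and you have proved only part (d), taking (a), (b) and (c) as given. Those parts are where nearly all of the work lies: (a) requires constructing a spherical van Kampen diagram $\Delta$ from a diagram for $[U,c^n]$ over $\pres{a,c}{[a,c^n]}$ and showing it hits a generator of $H_2(X)$; (b) and (c) require building the maximal abelian cover $\bar X$, retracting the $3$-complex $\bar Y$ onto a finite subcomplex $W$, reading off the cyclic presentation of $\pi_1(X)'$, and proving that the map $\Z\pi_1(X)\to\pi_2(X)$ sending $1$ to $\Delta$ is injective (again via the non-zero-divisor property of $1-c^n$) so that the four-term exact sequence collapses to the stated short exact sequence, together with the retraction argument giving the $\Z\pi_1(X)'$-splitting. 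Your part (d) argument moreover leans on facts only established in the course of that work --- the identification $\pi_2(X)\cong H_2(\tilde X)\subseteq C_2(\tilde X)$, the module structure on $\pi_2(\mathcal{G}_n(w))$ in which $c$ acts by $\phi_*$ (which you flag as the ``point of care'' but do not verify), and the existence of the sequence itself --- so as a proof of the theorem the proposal is substantially incomplete, even though the portion it does address is sound.
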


\begin{proof}
The cellular chain complex of $X$ implies that $H_2(X)$ is infinite cyclic, generated by the class of the relator $[a,c^n]$ and $H_1(X)$ is infinite cyclic generated by the class of the generator $c$. In the 1-relator group $\pres{a,c}{[a,c^n]}$ the element $c^n$ is central and so $[U,c^n]=1$ is a relator; therefore there is a planar van Kampen diagram $\Delta_0$ over the $1$-relator presentation $\pres{a,c}{[a,c^n]}$  with boundary label $[U,c^n]$. Since $a$ appears in $U$ with exponent-sum $1$, the $2$-cell $[a,c^n]$ appears in $\Delta_0$ with algebraic sum $1$.  In other words, $[a,c^n]^{+1}$ appears precisely once more than $[a,c^n]^{-1}$. (Since $1$-relator presentations of torsion-free groups are aspherical~\cite{Lyndon50} the algebraic sum is independent of the choice of van Kampen diagram.) Identify the sub-arcs of $\partial\Delta_0$ labelled by $c^n$ and $c^{-n}$ to form an annular diagram $\Delta_1$ with boundary labels $U$ and $U^{-1}$ and attach $2$-cells $U^{\pm 1}$ to $\partial\Delta_1$ to form a spherical diagram $\Delta$.  Now the relators $U(a,c)$ and $[a,c^n]$ appear in $\Delta$ with algebraic sums $0,1$ respectively, so $\Delta\in\pi_2(X)$ maps to a generator of $H_2(X)$ under the Hurewicz map $\pi_2(X)\rightarrow H_2(X)$. This map is therefore surjective so its cokernel, which is the Schur multiplier $H_2(\pi_1(X))$, is therefore trivial, proving part~(a).

Now use the spherical van Kampen diagram $\Delta$ to attach a $3$-cell to $X$ to form a $3$-complex $Y$ and let $\phi:\Z\pi_1(X)\to\pi_2(X)$ be the map of $\Z\pi_1(X)$-modules which sends a generator of $\Z\pi_1(X)$ to the class of $\Delta$. Then in the exact sequence
\begin{alignat}{1}
 0\rightarrow \mathrm{ker}(\phi) \rightarrow \Z\pi_1(X) \stackrel{\phi}{\rightarrow} \pi_2(X) \rightarrow \mathrm{coker}(\phi) \rightarrow 0 \label{eq:ses1}
\end{alignat}
of $\Z\pi_1(X)$-modules we have that $\mathrm{coker}(\phi)\cong \pi_2(Y)$~\cite{Whitehead41}.

Let $\tilde{X}$ be the universal cover of $X$. Then the cellular chain complex of $\tilde{X}$ implies that $H_2(\tilde{X})$ is the kernel of the second boundary map. Now $\pi_2(X)\cong\pi_2(\tilde{X})$ (see, for example,~\cite[Theorem~1.5, page 50]{Hilton}) and by the Hurewicz isomorphism theorem $\pi_2(\tilde{X})\cong H_2(\tilde{X})\subseteq C_2(\tilde{X})$ so $\pi_2(X)$ is a submodule of the free $\Z$-module on the $2$-cells of $\tilde{X}$.
Since $\pi_1(X)$ acts freely on $\tilde{X}$, it freely permutes the $2$-cells of $\tilde{X}$.  These form a $\mathbb{Z}$-basis for $C_2(\tilde{X})$, so $C_2(\tilde{X})$ is naturally identified with the free $\mathbb{Z}\pi_1(X)$-module on the set of $2$-cells of $X$. Under this identification the coefficient of the $2$-cell $U$ in the class of $\Delta$ has the form $g(1-c^n)$ for some $g\in\pi_1(X)$, since the $2$-cell $U$ appears precisely twice in $\Delta$, with opposite orientations, and these are separated by a path labelled $c^n$.
But by abelianizing we can see that $c$ has infinite order in $\pi_1(X)$, and so $1-c^n$ is not a zero-divisor in $\mathbb{Z}\pi_1(X)$.
Thus $\alpha U\neq 0$ for each $\alpha\in\mathbb{Z}\pi_1(X)\setminus\{0\}$. Hence $\Delta$ freely generates a free $\Z\pi_1(X)$-submodule of $\pi_2(X)$, and so $\phi$ is injective. Therefore the short exact sequence~(\ref{eq:ses1}) of $\Z\pi_1(X)$-modules becomes
\begin{alignat}{1}
 0\rightarrow \Z\pi_1(X) \stackrel{\phi}{\rightarrow} \pi_2(X) \rightarrow \pi_2(Y) \rightarrow 0. \label{eq:ses2}
\end{alignat}
Now $H_1(X)=H_1(Y)$ and $H_1(X)\cong \Z$ so the maximal abelian covers $\bar{X},\bar{Y}$ of $X,Y$ are regular covers with deck-transformation group $\Z$, generated by $c$. We can construct them as follows.  The set of $0$-cells is identified with $\Z$.  Corresponding to the $1$-cells $a,c$ of $X$ are $1$-cells $x_j,c_j$ ($j\in\Z$) of $\bar{X}$, where $x_j$ is a loop at $j$ and $c_j$ joins $j$ to $j+1$.  Corresponding to the $2$-cell $[a,c^n]$ in $X$ are $2$-cells $v_j$ in $\bar{X}$ ($j\in\Z$) with boundary paths $x_jc_jc_{j+1}\ldots c_{j+n-1}x_{j+n}^{-1}c_{j+n-1}^{-1}\ldots c_j^{-1}$; similarly, corresponding to the $2$-cell $U$ of $X$ is a set of $2$-cells $u_j$ ($j\in\Z$) in $\bar{X}$, where $u_{j+1}$ is the translate of $u_j$ by the natural generator for the deck-transformation group. Finally, corresponding to the $3$-cell $\Delta$ of $Y$ there are $3$-cells $\Delta_j$ of
$\bar{Y}$ ($j\in\Z$).  The $3$-cell $\Delta_j$ involves one appearance of each of the $2$-cells $u_j$ and $u_{j+n}$, and no other $2$-cell of the form $u_i$. It follows that $\bar{Y}$ homotopy retracts onto the $2$-subcomplex $W$ of $\bar{X}$ consisting of the $1$-skeleton $\bar{X}^{(1)}$ together with the $2$-cells $v_j$ ($j\in\Z$) and $u_1,\dots,u_n$.

Now the $1$-cells $c_j$ form the unique maximal tree in $W$.  Collapsing this tree to a point
gives the following presentation of the commutator subgroup $\pi_1(X)'$ which is homotopy-retract to $W$
$$\pres{x_j\ (j\in\Z)}{x_jx_{j+n}^{-1} (j\in\Z), w_1,\dots,w_n}$$
where
$$w_j= x_j^{\alpha_1}x_{j+\gamma_1}^{\alpha_2}x_{j+\gamma_1+\gamma_2}^{\alpha_3}\ldots x_{j+\gamma_1+\cdots+\gamma_{k-1}}^{\alpha_k}.$$
One final homotopy retraction uses the relators $x_jx_{j+n}^{-1}$ to restrict the generating set to $\{x_1,\dots,x_n\}$ and so we obtain the cyclic presentation $Z=\mathcal{G}_n(w)$ of part~(b), and it is clear that $\pi_1(X)$ is the natural HNN extension of $\pi_1(X)'$.

Now $\pi_2(Y)\cong \pi_2(\bar{Y})$ so we have $\pi_2(Y)\cong\pi_2(\bar{Y})\cong\pi_2(W)\cong\pi_2(Z)$ so in~(\ref{eq:ses2}) we may replace $\pi_2(Y)$ by $\pi_2(Z)$ to get the short exact sequence~(\ref{eq:ses0}) of $\Z\pi_1(X)$-modules of part~(c). The homotopies and inclusions
$ Z \simeq W\subset \bar{X}\subset \bar{Y}\simeq Z $
together with the identity map $Z\rightarrow Z$ induces a sequence of $\Z\pi_1(X)'$-modules
$ \pi_2(Z) \cong \pi_2(W) \rightarrow \pi_2(\bar{X})\rightarrow \pi_2(\bar{Y}) \cong \pi_2(Z).$
But $\pi_2(\bar{X})\cong \pi_2(X)$ so we get a sequence of $\Z\pi_1(X)'$-modules
$ \pi_2(Z)\rightarrow \pi_2(X) \rightarrow \pi_2(Z) $
which shows that the sequence~(\ref{eq:ses0}), when regarded as a sequence of $\Z\pi_1(X)'$-modules, splits, proving part~(c).

To prove~(d), consider a map $s:\pi_2(Z)\rightarrow \pi_2(X)$ of $\Z \pi_1(X)$-modules. Now $c^n$ acts trivially on $Z$ and hence on $\pi_2(Z)$ so $1-c^n$ annihilates $\pi_2(Z)$ and so annihilates $s(\pi_2(Z))\subseteq \pi_2(X) =\pi_2(\tilde{X})$. The Hurewicz isomorphism theorem implies that $\pi_2(\tilde{X})\cong H_2(\tilde{X})\subseteq C_2(\tilde{X})$ so $1-c^n$ annihilates a submodule of a free $\Z\pi_1(X)$-module (namely $s(\pi_2(Z))$). But as pointed out above $1-c^n$ is not a zero-divisor in $\mathbb{Z}\pi_1(X)$, so $s(\pi_2(Z))=0$ and the sequence~(\ref{eq:ses0}) of $\Z\pi_1(X)$-modules does not split when $\pi_2(Z)\neq\{0\}$.
\end{proof}

Theorem~1.1 of~\cite{GilbertHowie} (and its corollary), which concerns the presentations $\mathcal{H}(n,m)$, can be obtained as a corollary to Theorem~\ref{thm:exactseq}.

\section{The conjugacy problem for natural HNN extensions}\label{sec:conjugacy}

In Section~\ref{sec:smallcanc} we will give conditions under which the conjugacy problem for $H_n(m,k)$ is solvable. In this section, following~\cite{BMV}, we study the conjugacy problem in the natural HNN extension $\widehat{G}_n(w)=G_n(w)\rtimes \Z$ via the twisted conjugacy problem in $G_n(w)$. In Corollary~\ref{cor:consequencesofsmallcanc} we will apply our result to $\widehat{H}_n(m,k)$.

Given a group $G$ and an automorphism $\phi\in \mathrm{Aut}(G)$, an element $u\in G$ is said to be \em $\phi$-twisted conjugate \em to $v\in  G$, denoted $u\sim_\phi v$, if there exists $g\in G$ such that $g^{-1}u\phi(g)=v$. The \em $\phi$-twisted conjugacy problem \em in $G$
is solvable if it is algorithmically decidable if $u\sim_\phi v$. In particular, the conjugacy problem is the id-twisted conjugacy problem.
The \em twisted conjugacy problem \em in $G$ is solvable if the \em $\phi$-twisted conjugacy problem \em in $G$
is solvable for all $\phi\in\mathrm{Aut}(G)$.

\begin{theorem}\label{thm:twistedconj}
If $G$ is a finitely generated hyperbolic group and $\phi\in\mathrm{Aut}(G)$ has finite order, then the $\phi$-twisted conjugacy problem in $G$ is solvable.
\end{theorem}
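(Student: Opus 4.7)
Let $n$ be the order of $\phi$. The plan is to reduce the $\phi$-twisted conjugacy problem in $G$ to the ordinary conjugacy problem in the finite extension
\[
Q \;:=\; G \rtimes_\phi \langle\sigma\mid\sigma^n\rangle,
\]
where $\sigma$ acts on $G$ by $\phi$ (so $\sigma g\sigma^{-1}=\phi(g)$). Because $G$ is a subgroup of finite index~$n$ in $Q$, and because hyperbolicity is a quasi-isometry invariant and hence is inherited by finite extensions of hyperbolic groups, $Q$ is itself a finitely generated hyperbolic group. By Gromov's theorem, the conjugacy problem in $Q$ is solvable. The entire argument therefore rests on showing that $\phi$-twisted conjugacy of $u,v\in G$ is equivalent to ordinary conjugacy of the specific pair $u\sigma,v\sigma\in Q$.

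For this equivalence, a direct calculation using the relation $\sigma^j h=\phi^j(h)\sigma^j$ gives
\[
(k\sigma^j)(u\sigma)(k\sigma^j)^{-1} \;=\; k\,\phi^j(u)\,\phi(k)^{-1}\,\sigma
\]
for every $k\in G$ and every $j$. Every element of $Q$ has the form $k\sigma^j$, so $u\sigma\sim v\sigma$ in $Q$ if and only if there exist $k\in G$ and $j\in\{0,1,\dots,n-1\}$ with $v=k\,\phi^j(u)\,\phi(k)^{-1}$. Setting $g=k^{-1}$ in the definition of $\sim_\phi$, this condition reads $\phi^j(u)\sim_\phi v$. One direction of the reduction is now immediate: if $u\sim_\phi v$ then take $j=0$ to conclude $u\sigma\sim v\sigma$.

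The converse requires absorbing the extra freedom in the exponent $j$. For this, take $g=u$ in the definition of $\sim_\phi$ to obtain $u\sim_\phi\phi(u)$; iterating, and using the finite order of $\phi$ to handle negative exponents, gives $u\sim_\phi\phi^j(u)$ for every $j$. By transitivity of $\sim_\phi$, the condition $\phi^j(u)\sim_\phi v$ collapses to $u\sim_\phi v$. Hence $u\sim_\phi v$ in $G$ if and only if $u\sigma\sim v\sigma$ in $Q$, and the algorithm to decide $\phi$-twisted conjugacy is to form $u\sigma,v\sigma\in Q$ and run the conjugacy algorithm for the hyperbolic group $Q$.

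The main obstacle here is really only the coherent bookkeeping of the reduction: hyperbolicity of $Q$, decidability of the conjugacy problem in hyperbolic groups, and the observation that $u\sim_\phi\phi(u)$ are each standard, and the only computation is the one-line formula for conjugation in the semidirect product. Notably, no heavier machinery (such as the Bogopolski-Martino-Ventura orbit-decidability framework that will be invoked for more delicate versions of the twisted conjugacy problem) is required, precisely because the finite order of $\phi$ keeps the relevant extension inside the hyperbolic world.
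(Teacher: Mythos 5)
Your proof is correct, but it takes a genuinely different route from the paper. You reduce the $\phi$-twisted conjugacy problem to the ordinary conjugacy problem in the finite cyclic extension $Q=G\rtimes_\phi(\mathbb{Z}/n\mathbb{Z})$: the computation $(k\sigma^j)(u\sigma)(k\sigma^j)^{-1}=k\,\phi^j(u)\,\phi(k)^{-1}\sigma$ is right, the absorption of the exponent $j$ via $u\sim_\phi\phi(u)$ (take $g=u$) together with symmetry and transitivity of $\sim_\phi$ is right, and $Q$ is indeed hyperbolic because it contains $G$ with finite index and hyperbolicity is a quasi-isometry invariant of finitely generated groups; solvability of the ordinary conjugacy problem in the fixed hyperbolic group $Q$ then finishes the argument. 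The paper instead works entirely inside $G$: it chooses a finite generating set $X$ with $X=X^{-1}=\phi(X)$ (possible precisely because $\phi$ has finite order), builds a geodesic quadrilateral with sides $\gamma$ and $u\phi(\gamma)$ from a putative solution of $u\phi(g)=gv$, and uses $2\delta$-slimness to shorten any solution until $\|g\|$ is below an explicitly computable bound $N$, after which one checks the finitely many candidates with the word problem. Your reduction is shorter and leans on standard black boxes (QI-invariance of hyperbolicity, Gromov's solution of the conjugacy problem), whereas the paper's direct argument is self-contained, produces an explicit length bound on a twisted conjugator, and exhibits the mechanism — finite order of $\phi$ enabling a $\phi$-invariant generating set — that makes the geometry work. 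Both are complete proofs of the statement.
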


\begin{proof}
The proof is analogous to the proof that the conjugacy problem is solvable for hyperbolic groups.  Since $\phi$ has finite order and $G$ is finitely generated, we can choose a finite generating set $X$ for $G$ such that $X^{-1}=X=\phi(X)$.  Let $\delta$ be the hyperbolicity constant for the Cayley graph $\Gamma=\Gamma(G,X)$.  Then geodesic triangles in $\Gamma$ are $\delta$-slim, so geodesic quadrilaterals are $2\delta$-slim.  Suppose that $u\phi(g)=gv$ in $G$.  Choose geodesic segments $\gamma$ from $1$ to $g$, $\alpha$ from $1$ to $u$ and $\beta$ from $g$ to $gv$.  Applying $\phi$ to (the label of) each edge of $\gamma$ gives a geodesic segment $\phi(\gamma)$ from $1$ to $\phi(g)$.  Then we can construct a geodesic quadrilateral with vertices $1,u,g,gv=u\phi(g)$ and edges $\alpha,\beta,\gamma$ and $\gamma'=u\phi(\gamma)$.

Note that $\gamma,\gamma'$ have the same length, $L$ say, by construction.  Moreover, $\gamma'(n)=u\phi(\gamma(n))$ for each $1\leq n\leq L$. For each $n$, $\gamma(n)$ is $2\delta$-close to a vertex of $\alpha$, $\beta$ or $\gamma'$.
When $||u||+2\delta<n<L-(||v||+2\delta)$ only the last of these is possible, by the triangle inequality.  Thus $d(\gamma(n),\gamma'(n'))\leq 2\delta$ for some $n'$.
Consider the geodesic quadrilateral with vertices $1,\gamma(n),\gamma'(n'),u$. We have
\[n= d(1,\gamma(n))\leq d(1,u)+d(u,\gamma'(n'))+d(\gamma'(n'),\gamma(n))\leq ||u||+n'+2\delta\]
and similarly $n'\leq ||u||+n+2\delta$,
so $|n-n'|\leq ||u||+2\delta$. Hence $d(\gamma(n),\gamma'(n))\leq 2\delta +|n-n'|\leq ||u||+4\delta$.

The number of elements of $G$ of length at most $||u||+4\delta$ is less than $|X|^{||u||+4\delta+1}$.  Hence if $L>N$, where $N=|X|^{||u||+4\delta+1}+2\mu+4\delta$, the number of $n$'s in the given range exceeds the number of elements of $G$ of length at most $||u||+4\delta$. Therefore there exist $n_0<n_1$ with $\gamma(n_0)^{-1}\gamma'(n_0)=\gamma(n_1)^{-1}\gamma'(n_1)$. Setting $h=\gamma(n_0)\gamma(n_1)^{-1}g$ gives $||h||\leq L+n_0-n_1<L$ and
\begin{alignat*}{1}
u\phi(h)&=u\phi(\gamma(n_0))\phi(\gamma(n_1)^{-1})\phi(g)\displaybreak[1]\\
&=\gamma'(n_0)\gamma'(n_1)^{-1}u\phi(g)\displaybreak[1]\\
&=\gamma(n_0)\gamma(n_1)^{-1}gv\\
&=hv.
\end{alignat*}
Repeating the procedure gives a solution to the equation $u\phi(g)=gv$  with $||g||\leq N$. Since $N$ is a computable function of $u,v$ we can apply the solution of the word problem for hyperbolic groups to test equality of the elements $u\phi(g)$ and $gv$ for each of the finitely many $g$ with $||g||<N$.
\end{proof}

\begin{corollary}\label{cor:conjprobcycpres}
If $G$ is hyperbolic and $\phi\in\mathrm{Aut}(G)$ has finite order then the conjugacy problem for $G\rtimes_\phi \Z$ is solvable. In particular, if a cyclically presented group $G_n(w)$ is hyperbolic then the conjugacy problem for its natural HNN extension $\widehat{G}_n(w)$ is solvable.
\end{corollary}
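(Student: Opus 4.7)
The plan is to reduce the conjugacy problem in $G\rtimes_\phi\Z$ to finitely many instances of the twisted conjugacy problem in $G$, and then invoke Theorem~\ref{thm:twistedconj}.

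First I would write every element of $G\rtimes_\phi\Z$ in the normal form $gt^n$ with $g\in G$ and $n\in\Z$. Projection onto the $\Z$-factor is a homomorphism, so the $t$-exponent is a conjugacy invariant; given $g_1t^{n_1}$ and $g_2t^{n_2}$ we may immediately declare them non-conjugate if $n_1\neq n_2$, and otherwise assume $n_1=n_2=n$.

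Next, a direct computation using $tgt^{-1}=\phi(g)$ yields
\[
(ht^m)(g_1t^n)(ht^m)^{-1} \;=\; h\,\phi^m(g_1)\,\phi^n(h^{-1})\,t^n .
\]
Hence $g_1t^n$ and $g_2t^n$ are conjugate in $G\rtimes_\phi\Z$ if and only if there exist $h\in G$ and $m\in\Z$ with $h\,\phi^m(g_1)\,\phi^n(h^{-1})=g_2$; writing $k=h^{-1}$ this is precisely the twisted conjugacy relation $\phi^m(g_1)\sim_{\phi^n}g_2$. Since $\phi$ has finite order $d$, the elements $g_1,\phi(g_1),\ldots,\phi^{d-1}(g_1)$ exhaust the candidates for $\phi^m(g_1)$, and $\phi^n$ itself is an automorphism of $G$ of order dividing $d$, hence of finite order. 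By Theorem~\ref{thm:twistedconj} the $\phi^n$-twisted conjugacy problem in $G$ is solvable, so we can run that algorithm on each of the finitely many pairs $(\phi^m(g_1),g_2)$ and decide conjugacy in $G\rtimes_\phi\Z$.

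For the \emph{in particular} statement, the automorphism $\phi$ of $G_n(w)$ induced by the shift $\theta$ satisfies $\phi^n=\mathrm{id}$ and hence has finite order, so hyperbolicity of $G_n(w)$ yields solvability of the conjugacy problem for $\widehat{G}_n(w)=G_n(w)\rtimes_\phi\Z$. There is no real obstacle here once Theorem~\ref{thm:twistedconj} is in hand; the only care needed is to verify the reduction computation and to observe that allowing conjugating elements of the form $ht^m$ (rather than just $h\in G$) contributes only the extra choice of the exponent $m$, which is harmless because finite order of $\phi$ forces $m$ to range over a finite set.
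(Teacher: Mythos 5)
Your proposal is correct and follows essentially the same route as the paper: reduce conjugacy of $g_1t^{n}$ and $g_2t^{n}$ to finitely many instances of $\phi^{n}$-twisted conjugacy (the finite order of $\phi$ bounding the exponent $m$ to a finite set) and then invoke Theorem~\ref{thm:twistedconj}. The only cosmetic difference is that you place the power $\phi^m$ on $g_1$ where the paper places it on $v$, which is immaterial.
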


\begin{proof}
Let $t$ be a generator of $\Z$ so that $G\rtimes_\phi \Z$ has elements $gt^m$ with $g\in G$, $m\in \Z$ and multiplication
$(gt^m)(ht^l)=g\phi^m(h)t^{m+l}$.
Two elements $ut^p,vt^q\in G\rtimes_\phi \Z$ are conjugate if and only if there exist $gt^m\in G\rtimes_\phi \Z$ such that $(ut^p)(gt^m)=(gt^m)(vt^q)$; equivalently if and only if there exists $g\in G$, $m\in \Z$ such that $(u\phi^p(g))t^{p+m}=(g\phi^m(v))t^{m+q}$; equivalently if and only if $p=q$ and there exists $g\in G, m\in \Z$ such that $u\phi^p(g)=g\phi^m(v)$; equivalently if and only if $p=q$ and there exists $m\in \Z$ such that $u\sim_{\phi^p}\phi^m (v)$.

That is, the conjugacy problem is solvable in  $G\rtimes_\phi \Z$ if and only if for any $u,v\in G$ and $p\in \Z$ it is decidable if $u$ is $\phi^p$-twisted conjugate to any of the (finitely many) elements $\phi^m(v)\in G$. Since $\phi^p$ has finite order this is decidable by Theorem~\ref{thm:twistedconj}.
\end{proof}

\section{Small cancellation conditions}\label{sec:smallcanc}

In this section we consider small cancellation conditions for the presentations $\mathcal{H}_n(m,k)$. We refer the reader to~\cite[Chapter~V]{LyndonSchupp} for basic definitions regarding small cancellation theory. The strongest C($p$) condition that $\mathcal{H}_n(m,k)$ can satisfy is C($3$). This condition is most useful when it occurs in conjunction with T($6$) or T($7$). In Theorem~\ref{thm:CHRsmallcanc} we classify the C(3)-T(7) and C(3)-T(6) (special and non-special) presentations $\mathcal{H}_n(m,k)$. (We will define the term \em special \em shortly, but for the moment we note that they are examples of \em triangular presentations \em -- presentations in which each relator has length~3.)

To do this we use the characterization~(\cite{HillPrideVella}) of the C($3$)-T($q$) conditions in terms of the \em star graph \em of a presentation $\pres{X}{R}$. (The star graph is also called the \em co-initial graph \em or \em star complex \em or \em Whitehead graph\em.)  This is the graph $\Gamma$ with vertex set $X \cup X^{-1}$ and with an edge from $x$ to $y$ for each distinct word of the form $x^{-1}yu$ ($x\neq y$) that is a cyclic permutation of a relator or its inverse~\cite[page~61]{LyndonSchupp}. (The edges occur in inverse pairs.) By~\cite{HillPrideVella} a presentation $\pres{X}{R}$ in which each relator has length at least~3 satisfies C(3)-T($q$) ($q>4$) if and only if $\Gamma$ has no cycle of length less than~$q$. (See also~\cite{EdjvetHowie} for further explanation.) If a presentation satisfies T$(q)$, $q>4$ then each piece has length~1 (this is proved in~\cite[Lemma~1]{GerstenShortI}, where the observation is attributed to Pride). Using the terminology of~\cite{HowieSQ} a C(3)-T(6) presentation is \em special \em if every relator has length~3 and $\Gamma$ is isomorphic to the incidence graph of a finite projective plane.
By Lemma~1.3.6 of~\cite{Maldeghem} this condition on $\Gamma$ is equivalent to $\Gamma$ being connected and bipartite of diameter~3 and girth~6, with each vertex having degree at least~3.

\begin{theorem}\label{thm:CHRsmallcanc}
Let $A=k$, $B=k-m$.
\begin{itemize}
\item[(a)] The presentation $\mathcal{H}_n(m,k)$ is a C(3)-T(6) presentation if and only if
        $tA \not \equiv 0, tB \not \equiv 0$ ($1\leq t \leq 5$) and $A\not \equiv \pm B$, $A\not \equiv \pm 2B$, $B\not \equiv \pm 2A$ (all mod~$n$), in which case it is a non-special C(3)-T(6) presentation.
\item[(b)] The presentation $\mathcal{H}_n(m,k)$ is a C(3)-T(7) presentation if and only if
        $tA \not \equiv 0, tB \not \equiv 0$ ($1\leq t \leq 6$) and $A\not \equiv \pm 2B$, $A\not \equiv \pm 3B$, $B\not \equiv \pm 2A$, $B\not \equiv \pm 3A$, $2A\not \equiv \pm 2B$ (all mod~$n$).
\end{itemize}
\end{theorem}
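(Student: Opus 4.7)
The plan is to exploit the characterization of Hill, Pride, and Vella~\cite{HillPrideVella}: since every relator of $\mathcal{H}_n(m,k)$ has length~$3$, the presentation satisfies C($3$)-T($q$) (for $q>4$) if and only if its star graph $\Gamma$ has girth at least~$q$. Thus both parts of the theorem reduce to computing the girth of $\Gamma$; the non-special assertion in~(a) additionally requires ruling out that $\Gamma$ is the incidence graph of a finite projective plane.

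Writing $A=k$ and $B=k-m$, the three corners of each relator $r_i=x_ix_{i+m}x_{i+k}^{-1}$ together with their inverses contribute three edge-families to $\Gamma$:
\begin{equation*}
\text{type-1: }\{x_i^{-1},x_{i+m}\},\quad \text{type-2: }\{x_j^{-1},x_{j+B}^{-1}\},\quad \text{type-3: }\{x_j,x_{j+A}\}.
\end{equation*}
Under the hypotheses $A,B,2A,2B\not\equiv 0\pmod n$ the graph $\Gamma$ is $3$-regular: each vertex lies in exactly one type-1 edge and two same-sign edges. The key structural fact is that sign-parity forces every cycle to contain an even number of type-1 edges, and since each vertex lies in only one type-1 edge, a cycle of length~$\ell$ contains at most $\lfloor \ell/2\rfloor$ pairwise non-adjacent type-1 edges.

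I then enumerate the short cycles case by case. An all-same-sign cycle of length~$\ell$ exists iff $\ell A\equiv 0$ or $\ell B\equiv 0\pmod n$; a cycle with exactly two type-1 edges splits into two same-sign arcs (one of type-3 and one of type-2 edges), and tracing indices around produces a congruence $\pm aA\equiv \pm bB\pmod n$ where $a,b$ are the two arc-lengths. For~(a), running through $\ell=1,\dots,5$ yields exactly the conditions in the statement: $4$-cycles with two type-1 edges give $A\equiv\pm B$, pentagons with two type-1 edges give $A\equiv\pm 2B$ or $B\equiv\pm 2A$, and all-same-sign cycles give $tA\equiv 0$ or $tB\equiv 0$ for $t\leq 5$. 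For~(b), I additionally analyse hexagons: the two type-1 edges can sit at cyclic distance~$2$, contributing $A\equiv\pm B$, $A\equiv\pm 3B$ or $B\equiv\pm 3A$, or at cyclic distance~$3$, contributing $2A\equiv\pm 2B$; all-same-sign hexagons give $6A\equiv 0$ or $6B\equiv 0$. Since $2A\not\equiv\pm 2B$ already implies $A\not\equiv\pm B$, the combined list matches the statement. The main obstacle is the hexagon enumeration: one has to subdivide by the cyclic distance between the two type-1 edges and by sign assignments at each of their endpoints, and rule out accidental vertex coincidences that would shorten a nominal $6$-cycle into one of the already-forbidden shorter cycles.

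For the non-special assertion in~(a), the incidence graph of a finite projective plane of order~$q$ is $(q+1)$-regular on $2(q^2+q+1)$ vertices. Since $\Gamma$ is $3$-regular, the only candidate is the Fano plane ($q=2$), which forces $n=7$. But for $n=7$ the type-3 edges induce a single $7$-cycle on the positive vertices (as $\gcd(7,A)=1$), and an odd cycle contradicts bipartiteness of any projective-plane incidence graph. Hence $\Gamma$ is never special under the C($3$)-T($6$) hypothesis.
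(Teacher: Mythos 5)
Your proof is correct and, for the main enumeration, follows essentially the same route as the paper: both reduce C($3$)-T($q$) to the girth of the star graph via the Hill--Pride--Vella criterion, observe that a short cycle uses an even number (hence $0$ or $2$) of mixed-sign edges, and translate each cycle shape into a congruence $aA\equiv\pm bB$ (arcs of lengths $a,b$) or $tA\equiv 0$, $tB\equiv 0$ (same-sign cycles). Two small remarks on your version: your labelling attaches the difference $A=k$ to the positive--positive edges and $B=k-m$ to the negative--negative ones, the reverse of the paper's convention, which is immaterial since the stated conditions are symmetric under $A\leftrightarrow B$; and the stray ``$A\equiv\pm B$'' you list among the distance-$2$ hexagon congruences does not arise from arcs of lengths $1$ and $3$ (it is the $4$-cycle condition), but this is harmless because, as you correctly note, $2A\not\equiv\pm 2B$ subsumes $A\not\equiv\pm B$, which is exactly why that condition may be omitted from part~(b). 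Your ``parity plus arc-length'' packaging is a slightly more systematic rendering of the paper's explicit listing of the path types XZXY, XZXYY, XZZXYY, and so on, and like the paper you reasonably defer the routine check that a closed walk realizing one of the congruences is either embedded or contains a still shorter forbidden cycle. The one genuinely different step is the non-special claim: the paper forces $n=7$ from $3$-regularity exactly as you do, but then derives a contradiction by observing that the part~(a) congruences cannot all be satisfied modulo $7$; you instead note that for $n=7$ the same-sign edges on the positive vertices form a single $7$-cycle, so $\Gamma$ contains an odd cycle, is not bipartite, and hence cannot be the incidence graph of a projective plane. Your variant is arguably cleaner, as it avoids the residue check modulo $7$ and uses only $A\not\equiv 0$ and the primality of $7$.
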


\begin{proof}
The star graph $\Gamma$ has vertices $x_i,x_i^{-1}$ and edges $\{ x_i,x_{i+m}^{-1} \}$, $\{x_i ,x_{i+B} \}$, $\{x_i^{-1} ,x_{i+A}^{-1} \}$ for each $1\leq i \leq n$ (subscripts mod~$n$), which we will refer to as edges of type X,Y,Z, respectively. Thus $\Gamma$ has a $t$-cycle involving only $x_i$ vertices if and only if $tB\equiv 0$~mod~$n$ and has a $t$-cycle involving only $x_i^{-1}$ vertices if and only if $tA\equiv 0$~mod~$n$. Consider then cycles involving both $x_i$ and $x_i^{-1}$ vertices. Clearly these must involve an even number of X edges, and since an X edge cannot be followed immediately by another X edge there are no such 2-cycles or 3-cycles. We are therefore interested in the 4,5, and 6-cycles.

Any 4-cycle must be of the form XZXY, any 5-cycle of the form XZXYY, or XYXZZ, and any 6-cycle of the form XZXYYY, XYXZZZ, or XZZXYY. The paths of the form XZXY are $x_i - x_{i+m}^{-1} - x_{i+m\pm A}^{-1} - x_{i\pm A} - x_{i\pm A\pm B}$, which are cycles if and only if $A\pm B \equiv 0$~mod~$n$. The paths of the form XZXYY are $x_i - x_{i+m}^{-1} - x_{i+m\pm A}^{-1} - x_{i\pm A} - x_{i\pm A\pm B}- x_{i\pm A\pm 2B}$, which are cycles if and only if $A\pm 2B \equiv 0$~mod~$n$. Similarly cycles of the form XYXZZ occur if and only if $B\pm 2A \equiv 0$~mod~$n$.
The paths of the form XZXYYY are $x_i - x_{i+m}^{-1} - x_{i+m\pm A}^{-1} - x_{i\pm A} - x_{i\pm A\pm B}- x_{i\pm A\pm 2B}- x_{i\pm A\pm 2B}$, which are cycles if and only if $A\pm 3B \equiv 0$~mod~$n$. Similarly cycles of the form XYXZZZ occur if and only if $B\pm 3A \equiv 0$~mod~$n$.
The paths of the form XZZXYY are $x_i - x_{i+m}^{-1} - x_{i+m\pm A}^{-1} - x_{i+m\pm 2A}^{-1} - x_{i\pm 2A} - x_{i\pm 2A\pm B}- x_{i\pm 2A\pm 2B}$, which are cycles if and only if $2A\pm 2B \equiv 0$~mod~$n$. (For use in the proof of Theorem~\ref{thm:H_n(3,1)} we remark here that 7-cycles are of the form XZXYYYY, XYXZZZZ, XZZXYYY, or XYYXZZZ and occur if and only if $A\pm 4B\equiv 0$~mod~$n$ or $B\pm 4A\equiv 0$~mod~$n$ or $2A\pm 3B\equiv 0$~mod~$n$ or $2B\pm 3A\equiv 0$~mod~$n$.)

It remains to show that the C(3)-T(6) presentations are non-special. Suppose for contradiction that $\Gamma$ is the incidence graph of a projective plane $P$. Then since the vertices of $\Gamma$ have degree~3 it follows that $P$ has order~2 so has 7 points and 7 lines (see for example~\cite[Section~16.7 and exercise 16.10.10]{Biggs}). Therefore $\Gamma$ has precisely 14 vertices so $n=7$, but the congruences imply that $\mathcal{H}_n(m,k)$ is not a C(3)-T(6) presentation in this case, a contradiction.
\end{proof}

Some of the excluded cases correspond to known groups. By~\cite[Section~1]{BV} if $A\equiv 0$ or $B\equiv 0$~mod~$n$ then $G_n(m,k)$ is the trivial group; if $A\equiv B$~mod~$n$ then $G_n(m,k)$ is the free product of $(k,n)$ copies of $\Z_{2^n-1}$; if $A\equiv -B$ then $G_n(m,k)$ is the free product of $(k,n)$ copies of $S(2,n)$; if $A\equiv 2B$ or $B\equiv 2A$ then $G_n(m,k)$ is the free product of $(m,n)$ or $(k,n)$ copies of $F(2,n)$, respectively. If $A\equiv -2B$ or $B\equiv -2A$ then $G_n(m,k)$ is the free product of $(m,n)$ or $(k-m,n)$ copies of $H_n(3,1)$, respectively; we study these groups further in Section~\ref{sec:hyperbolicity}.

In Corollary~\ref{cor:consequencesofsmallcanc} we observe some consequences. A countable group $G$ is said to be \em SQ-universal \em if every countable group can be embedded in a quotient group of~$G$; in particular, SQ-universal groups contain a free subgroup of rank~2. The dependence problems~\cite{PrideGlasgow} DP($n$), $n\geq 1$, generalize the word problem (DP($1$)) and the conjugacy problem (DP($2$)). A group is \em Hopfian \em if it is not isomorphic to any of its proper quotients.

\begin{corollary}\label{cor:consequencesofsmallcanc}
\begin{itemize}
  \item[(a)] Suppose the conditions of Theorem~\ref{thm:CHRsmallcanc}(a) hold. Then the group $H_n(m,k)$ has solvable word and conjugacy problems and is automatic, SQ-universal, torsion-free, and acts properly and cocompactly on a CAT$(0)$ space.
  \item[(b)] Suppose the conditions of Theorem~\ref{thm:CHRsmallcanc}(b) hold. Then $H_n(m,k)$ has solvable dependence problems DP($n$) for all $n\geq 1$ and is non-elementary hyperbolic, Hopfian, and acts properly and cocompactly on a CAT$(-1)$ space, and the class of groups $H_n(m,k)$ has solvable isomorphism problem. Further, the natural HNN extension $\widehat{H}_n(m,k)$ has solvable conjugacy problem
\end{itemize}
\end{corollary}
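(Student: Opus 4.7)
The plan is to deduce each listed property from the small cancellation hypotheses of Theorem~\ref{thm:CHRsmallcanc} via standard results in the literature; essentially the proof is a bookkeeping argument that assembles known consequences of C(3)-T(6) and C(3)-T(7) presentations, together with the hyperbolic/twisted conjugacy machinery already set up in Section~\ref{sec:conjugacy}.

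For part~(a), under the C(3)-T(6) hypothesis I would appeal to Gersten--Short's theorem that groups with finite C(3)-T(6) presentations are biautomatic, which yields automaticity and solvability of the word and conjugacy problems. Torsion-freeness and the proper cocompact action on a CAT(0) space follow from the standard geometric realisation of a triangular C(3)-T(6) presentation as a piecewise Euclidean 2-complex of nonpositive curvature (Gersten--Short / Ballmann--Brin / Bridson--Haefliger), noting that T(6) forces links to have girth at least~$6$ so the natural $\pi/3$-triangle metric is locally CAT(0). For SQ-universality I would invoke Howie's theorem~\cite{HowieSQ}: a group defined by a finite non-special C(3)-T(6) presentation is SQ-universal; Theorem~\ref{thm:CHRsmallcanc}(a) asserts that the presentation is non-special, so this applies directly.

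For part~(b), the C(3)-T(7) condition combined with the standard linear isoperimetric inequality for such presentations shows that $H_n(m,k)$ is word-hyperbolic (Gersten--Short; this also gives the CAT($-1$) action via the obvious piecewise hyperbolic 2-complex whose triangles are given angles less than $\pi/3$, so that links of girth at least~$7$ produce a locally CAT($-1$) structure). Non-elementarity follows once one observes that $H_n(m,k)$ is infinite and not virtually cyclic (it already contains a nonabelian free subgroup by part~(a), since the C(3)-T(7) hypothesis implies C(3)-T(6)). Hopficity and solvability of the dependence problems DP($n$) for all $n$ are then standard theorems on (torsion-free) hyperbolic groups, as is solvability of the isomorphism problem in the class (Sela; Dahmani--Guirardel in the general case). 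Finally, the conjugacy problem for $\widehat{H}_n(m,k)$ is immediate from Corollary~\ref{cor:conjprobcycpres} applied to the hyperbolic cyclically presented group $G_n(w)=H_n(m,k)$.

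The only step that requires any real thought is the non-elementarity claim, which I would settle by noting that the presentation has more than one relator of length~$3$ and the standard arguments (e.g.\ via the Cayley 2-complex of a C(3)-T(6) presentation) produce two independent loxodromic elements; alternatively this is already visible from SQ-universality in part~(a). Everything else is a citation, and the main obstacle is simply verifying that the conditions of Theorem~\ref{thm:CHRsmallcanc} imply the precise hypotheses of each cited theorem, which is straightforward given that Theorem~\ref{thm:CHRsmallcanc} provides a finite presentation satisfying the required small cancellation condition and, in~(a), non-speciality.
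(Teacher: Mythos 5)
Your proposal is correct and follows essentially the same route as the paper: both proofs are assemblies of the standard consequences of C(3)-T(6) and C(3)-T(7) presentations (Gersten--Short for automaticity and hyperbolicity, Howie for SQ-universality of non-special C(3)-T(6) presentations, the CAT(0)/CAT($-1$) metrics on the Cayley complex for torsion-freeness, Pride for the dependence problems, Sela and Dahmani--Guirardel for Hopficity and the isomorphism problem, and Corollary~\ref{cor:conjprobcycpres} for the conjugacy problem of the HNN extension). The only divergences are cosmetic: you route the word and conjugacy problems through biautomaticity where the paper cites Lyndon--Schupp directly, and you spell out the non-elementarity argument (via the free subgroups guaranteed by SQ-universality) that the paper leaves implicit.
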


\begin{proof}
C(3)-T(6) groups have solvable word and conjugacy problems, by Theorems~6.3 and~7.6 of~\cite[Chapter~V]{LyndonSchupp} (see also ~\cite[Section~3]{ElMosalamyPride}) and they are automatic by~\cite[Theorem~2]{GerstenShortII}. Groups defined by non-special C(3)-T(6) groups are SQ-universal~\cite{HowieSQ}. The Cayley complex of a C(3)-T(6) (resp. C(3)-T(7)) presentation in which each relator has length at least~3 is a CAT$(0)$ (resp. CAT$(-1)$) space (see for example~\cite[Proposition~5.25]{BridsonHaefliger}), and hence contractible. Since no relator is a proper power the Cayley complex coincides with the universal cover of the presentation complex, upon which $H_n(m,k)$ acts freely. Thus $H_n(m,k)$ acts freely on a finite-dimensional contractible complex, so is torsion-free.

C(3)-T(7) groups have solvable dependence problems DP($n$) for all $n\geq 1$~\cite[Theorems~2 and~4]{PrideGlasgow} and are hyperbolic~\cite[Corollary~3.3]{GerstenShortI} (see also~\cite[Theorem~4]{PrideGlasgow}); the class of hyperbolic groups has solvable isomorphism problem~\cite{DahmaniGuirardel}; torsion-free hyperbolic groups are Hopfian by~\cite{Sela}. Since $H_n(m,k)$ is hyperbolic, $\widehat{H}_n(m,k)$ has solvable conjugacy problem by Corollary~\ref{cor:conjprobcycpres}.
\end{proof}

We note a corollary which shows that nearly all the groups $H_p(m,k)$ with $p$ an odd prime are C(3)-T(7). By~\cite[Proposition~7]{COS} $H_p(m,k)$ is isomorphic to one of $H_p(1,k')$ for some $1\leq k' \leq (p+1)/2$. The cases $k'=1,2$ and $(p+1)/2$ give the trivial group, the Fibonacci group $F(2,p)$, and the Sieradski group $S(2,p)$, respectively. For the remaining cases, for each $p$, the next corollary shows that precisely one of the presentations is not C(3)-T(7) (in which case it is not C(3)-T(6)). Note that this is not the case when $n$ is composite, for example by~\cite[Table~1]{COS} there are precisely 5 non-isomorphic (non-trivial) groups $H_n(m,k)$ when $n=6$, namely $F(2,6)=H_6(1,2),\ S(2,6)\cong H_6(4,5),\ \Z_7\cong H_6(1,3),\ \Z_9\cong H_6(2,3),\ \Z_2^3 \rtimes \Z_7 \cong H_6(3,4)$.

\begin{corollary}\label{cor:prime}
Let $p$ be an odd prime and let $3\leq k \leq (p-1)/2$ be an integer. The presentation $\mathcal{H}_p(1,k)$ is C(3)-T(7) if and only if $k\neq (p + 1)/3$ (when $p\equiv 2$~mod~$3$) or $k\neq (p + 2)/3$ (when $p\equiv 1$~mod~$3$).
\end{corollary}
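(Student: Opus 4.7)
The plan is to apply Theorem~\ref{thm:CHRsmallcanc}(b) directly, with $n = p$, $m = 1$, so that $A = k$ and $B = k - 1$, and to determine precisely which $k \in [3, (p-1)/2]$ violate a condition of the theorem. Since $p$ is an odd prime with $p \geq 7$ (forced by $(p-1)/2 \geq 3$), and each of $k, k-1, t$ is a nonzero unit modulo $p$ for $1 \leq t \leq 6$, the six divisibility conditions $tA \not\equiv 0$ and $tB \not\equiv 0 \pmod p$ are automatic and can be dispatched first.

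I would then translate each of the cross-conditions of Theorem~\ref{thm:CHRsmallcanc}(b) into a linear congruence in $k$ modulo $p$. For instance, $A \equiv 2B \Leftrightarrow k \equiv 2$, $B \equiv 2A \Leftrightarrow k \equiv -1$, $A \equiv -2B \Leftrightarrow 3k \equiv 2$, and $B \equiv -2A \Leftrightarrow 3k \equiv 1$, while the $\pm 3$-conditions and $2A \equiv -2B$ produce congruences of the form $2k \equiv c$ or $4k \equiv c'$. Each has a unique solution modulo $p$, and the task reduces to identifying which of these solutions actually lie in $[3, (p-1)/2]$.

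Most solutions can be dismissed by inspection: $k \equiv 2$ falls below the range, $k \equiv p-1$ above it, and the congruences of the form $2k \equiv c$ (with $c$ odd) yield $k \equiv (p+c)/2 > (p-1)/2$. The crux of the argument is the analysis of $3k \equiv 1$ and $3k \equiv 2 \pmod p$. Using the closed forms $3^{-1} \equiv (p+1)/3 \pmod p$ when $p \equiv 2 \pmod 3$ and $3^{-1} \equiv (2p+1)/3 \pmod p$ when $p \equiv 1 \pmod 3$, one checks that exactly one solution of these two congruences lands in $[3, (p-1)/2]$: namely $(p+1)/3$ when $p \equiv 2 \pmod 3$, and $(p+2)/3$ when $p \equiv 1 \pmod 3$; the "other" solution in each case (respectively $2(p+1)/3$ and $(2p+1)/3$) exceeds $(p-1)/2$.

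\textbf{Main obstacle.} The delicate step is the bookkeeping for the congruences $4k \equiv 1$ and $4k \equiv 3$ arising from $B \equiv -3A$ and $A \equiv -3B$, together with $2k \equiv -1$ from $B \equiv 3A$: their solutions are $4^{-1}$, $3\cdot 4^{-1}$, and $(p-1)/2$ modulo $p$, whose closed forms depend on $p \pmod 4$. One must verify via a case analysis according to $p \pmod{12}$ that these solutions either fall outside $[3,(p-1)/2]$ or coincide with the exclusion already identified from $3k\equiv 1,2$. This is where the bulk of the technical work resides.
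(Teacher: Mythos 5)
Your overall strategy --- substitute $A=k$, $B=k-1$ into Theorem~\ref{thm:CHRsmallcanc}(b), convert each condition into a linear congruence in $k$ modulo $p$, and decide which solutions land in $[3,(p-1)/2]$ --- is exactly the approach the paper takes (its proof is essentially the single line ``this follows by testing the congruences''), and everything you do up to and including the analysis of $3k\equiv 1$ and $3k\equiv 2$ is correct. The genuine gap is the step you defer as the ``main obstacle'': the verification that the solutions of $2k\equiv-1$, $4k\equiv1$ and $4k\equiv3$ ``either fall outside $[3,(p-1)/2]$ or coincide with the exclusion already identified'' is not a routine case check that succeeds --- it fails. The congruence $2k\equiv-1$ (from $B\equiv 3A$) has solution $k=(p-1)/2$, which is the right-hand endpoint of your interval rather than beyond it (your own formula $k\equiv(p+c)/2$ gives exactly $(p-1)/2$ at $c=-1$), and for $p\geq 11$ this value is distinct from both $(p+1)/3$ and $(p+2)/3$. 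Likewise $4k\equiv1$ (from $B\equiv-3A$) has the in-range solution $k=(p+1)/4$ when $p\equiv 3\bmod 4$ and $p\geq 11$, and $4k\equiv3$ (from $A\equiv-3B$) has the in-range solution $k=(p+3)/4$ when $p\equiv 1\bmod 4$ and $p\geq 13$; none of these coincides with the stated exclusions in general.

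Concretely, take $p=11$ and $k=5$: then $A=5$, $B=4$ and $3A=15\equiv 4=B\bmod 11$, so Theorem~\ref{thm:CHRsmallcanc}(b) says $\mathcal{H}_{11}(1,5)$ is not C(3)-T(7); indeed $x_1 - x_6 - x_{11} - x_5 - x_4^{-1} - x_{11}^{-1} - x_1$ is a $6$-cycle in its star graph. The same happens for $p=11$, $k=3$ (where $B\equiv-3A$). So no completion of your case analysis modulo $12$ can deliver the statement as written: the congruences coming from the length-$6$ cycles genuinely add excluded values beyond $(p+1)/3$ and $(p+2)/3$. What your argument does establish --- and this is also all that the paper's own proof checks, since it cites only the conditions $A\equiv-2B$ and $B\equiv-2A$ --- is the analogous statement for C(3)-T(6), because the conditions of Theorem~\ref{thm:CHRsmallcanc}(a) involve only the congruences you have already dispatched correctly. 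To obtain a correct T(7) statement one must additionally exclude $k=(p-1)/2$, together with $k=(p+1)/4$ or $k=(p+3)/4$ according to $p\bmod 4$.
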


\begin{proof}
This follows by testing the congruences. The case $k=(p+1)/3$ corresponds to $B\equiv -2A$~mod~$n$ and the case $k=(p+2)/3$ corresponds to $A\equiv -2B$~mod~$n$.
\end{proof}

\section{The groups $H(n,3)$}\label{sec:hyperbolicity}

As observed in Section~\ref{sec:smallcanc}, the presentations $\mathcal{H}_n(3,1)$, which are the presentations $\mathcal{H}(n,3)$ of~\cite{GilbertHowie}, are not C(3)-T(6) presentations; we show here however that, in most cases, $H(n,3)=H_n(3,1)$ is hyperbolic and SQ-universal.

The group $H(9,3)$ was proved to be infinite in~\cite[Lemma~15]{COS}, using Newman's variant of the Golod-Shafarevich inequality on an index~448 subgroup. Further, $H(8,3)$ is finite of order $3^{10}\cdot 5$ (by a calculation in GAP~\cite{GAP}), $H(7,3)$ is infinite (this is attributed to Thomas in~\cite{GilbertHowie}), $H(6,3)\cong \Z_2^3\rtimes \Z_7$, $H(5,3)\cong\Z_{11}$, $H(4,3)\cong \Z_5$.
The groups $H(10,3),H(12,3),H(14,3)$ are infinite by~\cite{GilbertHowie}.

\begin{theorem}\label{thm:H_n(3,1)}
Let $n\geq 11$, $n\neq 12,14$. Then $H(n,3)$ is hyperbolic.
\end{theorem}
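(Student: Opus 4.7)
The plan is to establish a linear isoperimetric inequality for the presentation $\mathcal{H}_n(3,1)$ via a combinatorial curvature argument, which will yield word-hyperbolicity of $H(n,3)$ by Gromov's criterion. First I would analyse the star graph $\Gamma$ of $\mathcal{H}_n(3,1)$: setting $A = k = 1$ and $B = k - m = -2$ in the notation of the proof of Theorem~\ref{thm:CHRsmallcanc}, the identity $B + 2A = 0$ holds in $\Z$, so $\Gamma$ always contains a family of XYXZZ $5$-cycles, and consequently $\mathcal{H}_n(3,1)$ cannot be C($3$)-T($6$). The congruence formulas in the proof of Theorem~\ref{thm:CHRsmallcanc}, together with the parenthetical remark on $7$-cycles, let me enumerate the remaining short cycles: a YYYYYY $6$-cycle appears iff $n \mid 12$ and a YYYYYYY $7$-cycle iff $n \mid 14$, while all other small-cycle conditions require $n \leq 10$. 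Hence for $n \geq 11$ with $n \neq 12, 14$, the star graph has girth exactly $5$, and its only cycles of length at most $7$ are the XYXZZ $5$-cycles.

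Next I would apply the Gersten-Pride weight test. Assigning uniform angles $\alpha_X, \alpha_Y, \alpha_Z$ to the three types of corner, the face condition $\alpha_X + \alpha_Y + \alpha_Z \leq \pi$ (each relator has length $3$) together with the link condition $2\alpha_X + \alpha_Y + 2\alpha_Z \geq 2\pi$ coming from the XYXZZ $5$-cycle force $\alpha_Y = 0$ and $\alpha_X + \alpha_Z = \pi$; thus symmetric angle assignments give at best flat Gauss-Bonnet balance, so strict negative curvature cannot be achieved this way. I expect this to be the main obstacle. The plan to circumvent it is a local analysis of reduced disc diagrams at any interior vertex whose link is an XYXZZ $5$-cycle: the two X-corner $2$-cells are tightly constrained by the identity $B = -2A$, and I would argue that two such bad vertices sharing a $2$-cell would force a cancelling pair of $2$-cells and thereby contradict reducedness. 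Equivalently one could perform a Tietze move that fuses the two X-corner $2$-cells at each bad configuration into a single length-$4$ consequence relator, yielding an equivalent presentation whose star graph has girth $\geq 6$ and therefore falls within the range of standard C($3$)-T($\geq 6$) small cancellation theory.

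With bad vertices thus pinned down to appear only in bounded density relative to the boundary length, the remaining good interior vertices have link cycles of length $\geq 6$, on which a strictly positive curvature margin can be arranged. Summing the strictly negative contributions at good vertices against the zero contributions at bad vertices via the combinatorial Gauss-Bonnet formula yields $\mathrm{Area}(D) \leq C \cdot |\partial D|$ for every reduced disc diagram $D$, which establishes hyperbolicity of $H(n,3)$. The exceptional cases $n = 12, 14$ are excluded precisely because the additional short all-$Y$ cycles appearing there eliminate this margin; the cases $n \leq 10$ have already been settled by direct computation in the discussion preceding the theorem statement.
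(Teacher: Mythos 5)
Your overall strategy coincides with the paper's: prove a linear isoperimetric inequality for $\mathcal{H}(n,3)$ by assigning angles to corners of a reduced van Kampen diagram, after first using the star graph computation from the proof of Theorem~\ref{thm:CHRsmallcanc} to show that for $n\geq 11$, $n\neq 12,14$ the only cycles of length less than $8$ are the $5$-cycles of type XZZXY forced by $B\equiv -2A$. That part of your analysis, including the role of $n\mid 12$ and $n\mid 14$ in producing all-Y $6$- and $7$-cycles and the observation that uniform angles can only achieve nonpositive curvature, is correct and matches the paper.

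The gap is in how you resolve the degree-$5$ vertices. Your key claim --- that two interior vertices whose links are these $5$-cycles cannot lie on a common $2$-cell in a reduced diagram, because this would force a cancelling pair --- is false. The paper's local analysis shows only that an interior degree-$5$ vertex has \emph{at most two} neighbours that are interior of degree $5$ (and that these are non-adjacent, so no triangle has all three vertices interior of degree $5$); triangles with \emph{two} interior degree-$5$ vertices do occur and must be accommodated, which the paper does by giving such a triangle angles $70^\circ,70^\circ,39^\circ$ rather than excluding it. Your fallback of a Tietze move fusing two $2$-cells into a length-$4$ relator is not developed, and the assertion that the resulting presentation has star graph of girth $\geq 6$ is unsubstantiated. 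A second, independent gap is your claim that the remaining ``good'' interior vertices automatically carry a strictly positive curvature margin: once the degree-$5$ vertices force small angles ($39^\circ$--$49^\circ$ in the paper's scheme) at the other corners of their triangles, an interior vertex of degree $8$ or $9$ whose neighbours are mostly of degree $5$ could have angle sum below $2\pi$. Ruling this out requires a case-by-case analysis of the three types of $8$-cycles and of the $9$-cycles in the star graph, showing that each such vertex has sufficiently many neighbours that are not interior of degree $5$ (at least three for degree $8$, at least one for degree $9$). This is the bulk of the paper's proof and is absent from your proposal.
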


\begin{proof}
To prove that $H(n,3)$ is hyperbolic it suffices to show that it has a linear isoperimetric function (see for example~\cite[Theorem~3.1]{Gersten}). That is, we must show that there is a linear function $f:\mathbb{N}\rightarrow \mathbb{N}$ such that for all $N\in \mathbb{N}$ and all freely reduced words $w\in F_n$ with length at most $N$ that represent the identity of $H(n,3)$ we have $\mathrm{Area}(w)\leq f(N)$ (where $\mathrm{Area}(w)$ denotes the minimum number of 2-cells in a reduced van Kampen diagram over $\mathcal{H}(n,3)$ with boundary label $w$). Without loss of generality we may assume that the boundary of such a van Kampen diagram is a simple closed curve. Note that each 2-cell in a van Kampen diagram over $\mathcal{H}(n,3)$ is a triangle.

By the proof of Theorem~\ref{thm:CHRsmallcanc} cycles in the star graph of $\mathcal{H}(n,3)$ have length~5 or at least~8. The star graph has edges $\{x_i,x_{i+3}^{-1}\}, \{x_i,x_{i+2}\}, \{x_i^{-1},x_{i+1}^{-1}\}$ for each $1\leq i \leq n$ (subscripts mod~$n$), which we again refer to as edges of type X,Y,Z, respectively. Any 5-cycle must be of the form XZXYY or XZZXY; there are no cycles of the first form and cycles of the second form are (i)~$x_i-x_{i+3}^{-1}-x_{i+2}^{-1}-x_{i+1}^{-1}-x_{i-2}-x_i$. Similarly, cycles of length 8 are of the form (ii)~$x_i-x_{i+2}-x_{i+4}-x_{i+6}-x_{i+8}-x_{i+10}-x_{i+12}-x_{i+14}-x_{i}$ (when $n=16$) or (iii)~$x_i-x_{i+3}^{-1}-x_{i+4}^{-1}-x_{i+1}-x_{i+3}-x_{i+6}^{-1}-x_{i+5}^{-1}-x_{i+2}-x_{i}$ or (iv)~$x_i^{-1}-x_{i-3}-x_{i-1}-x_{i+1}-x_{i+4}^{-1}-x_{i+3}^{-1}-x_{i+2}^{-1}-x_{i+1}^{-1}-x_{i}^{-1}$.

Consider a reduced van Kampen diagram over $\mathcal{H}(n,3)$ whose boundary is a simple closed curve.
Given a degree 5 interior vertex corresponding to (i), completing the incident 2-cells shows that at most 2 of the adjacent vertices can be interior of degree 5; those 2 vertices are non-adjacent so there are no triangles whose vertices are all interior of degree 5.
For a degree 8 interior vertex corresponding to~(ii), completing the incident 2-cells shows that at most 4 of the adjacent vertices can be interior of degree 5.
For a vertex corresponding to~(iii), completing the incident 2-cells reveals that three of the adjacent vertices are not of the form of an interior  degree 5 vertex.
For a vertex corresponding to~(iv), completing the incident  2-cells reveals that one of the adjacent vertices is not of the form of an interior  degree 5 vertex. Of the remaining 7 adjacent vertices completing the incident 2-cells shows that at most 5 of the adjacent vertices can be interior of degree 5.
In summary, an interior vertex of degree 5 has at most 2 neighbours that are interior of degree 5 and any interior vertex of degree 8 has at least three neighbouring vertices that are not interior of degree~5.

Consider now degree 9 interior vertices; we claim that any such vertex has a neighbouring vertex that is not interior of degree 5. A vertex corresponding to a cycle containing a subpath of the form ZZZ has an adjacent vertex that is not of the form of an interior degree 5 vertex. A vertex corresponding to a cycle containing a subpath of the form YYYY has at least one vertex that is not interior of degree 5. Therefore the only degree 9 vertices that can have all its neighbouring vertices interior of degree 5 correspond to cycles of the form
XZXYXZXYY or XYXZXYXZZ. Cycles of the first type are $x_i-x_{i+3}^{-1}-x_{i+4}^{-1}-x_{i+1}-x_{i+3}-x_{i+6}^{-1}-x_{i+7}^{-1}-x_{i+4}-x_{i+2}-x_i$, and a vertex corresponding to such a cycle has a neighbouring interior vertex that is not of degree 5; there are no cycles of the second form, so the claim is proved.

Bearing in mind the above information, we assign angles to corners of each triangle
$\Delta$ of the van Kampen diagram as follows.
 If no vertex of $\Delta$ is interior of index $5$, then
each corner has angle $59^\circ$; if precisely one vertex of $\Delta$ is interior of index $5$, then
the corner at that vertex has angle $80^\circ$ and
each of the other corners has angle $49^\circ$; if two vertices of $\Delta$ are interior of index $5$, then
each of the corners of $\Delta$ at those vertices each have angle $70^\circ$,
and the third corner of $\Delta$ has angle $39^\circ$.
(As observed above, it is impossible for a triangle $\Delta$ to have three
interior vertices of index $5$.)

It follows from our various observations that no interior vertex has angle sum less than $360^\circ$,
while no triangle has angle sum greater than $179^\circ$.  Hence if there are $T$ triangles and $B$
boundary vertices in the diagram, then
the sum of the angles at boundary vertices is at most $(180B-T-360)^\circ$.  However, this sum is at least
$39B^\circ$, since each corner has angle at least $39^\circ$.  Hence
$39B<(180B-T-360)$, so $T<141B-360$ and $f(N)=141N-360$ is a suitable linear isoperimetric function.
\end{proof}

It seems likely that more detailed analysis could be used to show that the result extends to the cases $n=12,14$.

\begin{corollary}\label{cor:H(n,3)SQ}
Let $n\geq 11$, $n\neq 12,14$. Then $H(n,3)$ is SQ-universal.
\end{corollary}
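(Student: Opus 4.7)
The plan is to apply the classical theorem that every non-elementary word-hyperbolic group is SQ-universal (Olshanskii). By Theorem~\ref{thm:H_n(3,1)} the group $H(n,3)$ is hyperbolic in the stated range, so it suffices to verify that $H(n,3)$ is non-elementary, i.e., that it is infinite and not virtually $\Z$.

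For the infiniteness step, I would first exploit the small cases already dealt with in the preceding paragraph: $H(10,3), H(12,3), H(14,3)$ are infinite by~\cite{GilbertHowie}, and the Golod--Shafarevich-style argument from~\cite[Lemma 15]{COS} handles $H(9,3)$. For the remaining $n\geq 11$, the most efficient route is through the abelianization: $H(n,3)^{\mathrm{ab}}$ is the quotient of $\Z^n$ by the circulant matrix associated to the polynomial $1 - t + t^3$, so its order equals $|\mathrm{Res}(1-t+t^3,\,t^n-1)|$, a quantity which one can bound below. Alternatively, one may exhibit an element of infinite order directly using the natural HNN extension structure (Theorem~\ref{thm:exactseq}(b)), since $\widehat{H}(n,3) \cong H(n,3) \rtimes \Z$ contains $\Z^2$ (generated by $a$ and $c^n$) and is therefore infinite, and the arithmetic of the isoperimetric function built in the proof of Theorem~\ref{thm:H_n(3,1)} (whose linear growth persists under quotient to $H(n,3)$) rules out the trivial group.

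To rule out a virtually cyclic structure, I would argue as follows. Any infinite virtually cyclic group is either of the form $F\times\Z$ (in which case $\Z$ is a quotient of the abelianization) or of the form $F_1 *_E F_2$ with $F_1,F_2$ finite and $E$ of index $2$ in each (in which case $(\Z/2)^2$ appears as a quotient of the abelianization via the surjection onto $D_\infty^{\mathrm{ab}}$). Since $H(n,3)^{\mathrm{ab}}$ is finite (as noted above), the first possibility is immediately excluded; the second is ruled out by a direct inspection of $H(n,3)^{\mathrm{ab}}$ for the $n$ under consideration, where one checks that $(\Z/2)^2$ does not appear as a quotient. Hence $H(n,3)$ is non-elementary hyperbolic, and SQ-universality follows from Olshanskii's theorem.

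The main obstacle will be making the infiniteness verification uniform across all $n\geq 11$ with $n\neq 12,14$; the not-virtually-cyclic step reduces to elementary abelianization arithmetic and should be routine, while the final appeal to Olshanskii is immediate once non-elementariness is confirmed.
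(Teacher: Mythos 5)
Your final step (non-elementary hyperbolic $\Rightarrow$ SQ-universal, by Ol'shanskii) is the same as the paper's, but your verification that $H(n,3)$ is non-elementary has a genuine gap, and the missing ingredient is exactly the one the paper supplies: \emph{torsion-freeness}, obtained from the asphericity of $\mathcal{H}(n,3)$ (\cite[Theorem~3.2]{GilbertHowie}) together with the absence of proper-powered relators. None of your three routes to infiniteness works. (i) Bounding $|\mathrm{Res}(1-t+t^3,t^n-1)|$ from below only shows that a certain \emph{finite} quotient is non-trivial (or large); a large finite abelianization cannot certify infiniteness --- indeed $H(8,3)$ is finite of order $3^{10}\cdot 5$. (ii) The claim that $a$ and $c^n$ generate $\Z^2$ in $\widehat{H}(n,3)$ assumes what is to be proved: they commute, but if $H(n,3)$ were finite then $\langle a,c^n\rangle$ would be virtually cyclic, so exhibiting $\Z^2$ requires already knowing the group is infinite (and the mere infiniteness of $\widehat{H}(n,3)=H(n,3)\rtimes\Z$ says nothing about $H(n,3)$). (iii) A linear isoperimetric function cannot rule out finiteness or triviality, since every finite group satisfies one. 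The paper's argument is short: asphericity gives torsion-freeness, and a torsion-free group with non-trivial (finite) abelianization, as guaranteed by \cite[Theorem~2.3]{GilbertHowie}, is non-trivial and hence infinite.

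Your exclusion of the virtually cyclic case also has a flaw beyond the (minor) misstatement of the classification (the orientable type is finite-\emph{by}-$\Z$, not necessarily $F\times\Z$, though it still surjects onto $\Z$ so your abelianization test for that case is fine). For the $D_\infty$ type you propose to check that $(\Z/2)^2$ is not a quotient of $H(n,3)^{\mathrm{ab}}$, but this check \emph{fails} whenever $7\mid n$: the relation polynomial $1-t+t^3$ reduces mod $2$ to the irreducible cubic $1+t+t^3$, which divides $t^7-1$ over $\mathbb{F}_2$, so for $n=21,28,35,\dots$ the abelianization surjects onto $(\Z/2)^3$ and in particular onto $(\Z/2)^2$. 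So ``direct inspection'' does not close this case uniformly. Again torsion-freeness rescues the argument: a torsion-free virtually cyclic group is trivial or $\Z$, and both are excluded since $H(n,3)$ is infinite with finite abelianization. You should therefore restructure the proof around the asphericity/torsion-freeness input rather than around abelianization arithmetic.
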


\begin{proof}
By~\cite[Theorem~3.2]{GilbertHowie} $\mathcal{H}(n,3)$ is aspherical, so (since there are no proper powered relators) the group $H(n,3)$ is torsion-free. Further, by~\cite[Theorem~2.3]{GilbertHowie} $\mathcal{H}(n,3)^{\mathrm{ab}}$ is finite and non-trivial, and hence $H(n,3)$ is infinite and non-abelian. Thus $H(n,3)$ is torsion-free and non-cyclic, so it is not virtually cyclic. Therefore it is non-elementary hyperbolic and torsion-free so it is SQ-universal by~\cite{Olshanskii} or~\cite{Delzant}.
\end{proof}

By Corollary~\ref{cor:conjprobcycpres} we have

\begin{corollary}\label{cor:H(n,3)conjugacy}
Let $n\geq 11$, $n\neq 12,14$. Then the natural HNN extension $\widehat{H}(n,3)$ has solvable conjugacy problem.
\end{corollary}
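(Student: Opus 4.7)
The plan is to combine two previously established results essentially immediately: Theorem~\ref{thm:H_n(3,1)} tells us that $H(n,3)$ is hyperbolic for $n \geq 11$, $n \neq 12, 14$, and Corollary~\ref{cor:conjprobcycpres} tells us that if a cyclically presented group $G_n(w)$ is hyperbolic, then its natural HNN extension $\widehat{G}_n(w)$ has solvable conjugacy problem.

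First I would recall that $H(n,3) = H_n(3,1)$ arises as a cyclically presented group $G_n(w)$ in the sense of Section~\ref{sec:prelims}, so that the natural HNN extension $\widehat{H}(n,3) = H(n,3) \rtimes_\phi \Z$ fits precisely into the framework of Corollary~\ref{cor:conjprobcycpres}. Then I would invoke Theorem~\ref{thm:H_n(3,1)} to conclude hyperbolicity of the base group $H(n,3)$ under the given hypotheses on $n$. Finally, applying Corollary~\ref{cor:conjprobcycpres} to this hyperbolic $H(n,3)$ immediately yields solvability of the conjugacy problem for $\widehat{H}(n,3)$.

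Since the two ingredients are already in place, there is no real obstacle; the only thing to verify is the trivial bookkeeping that $H(n,3)$ is indeed presented as a cyclically presented group in the required form (which is transparent from the definition of $\mathcal{H}_n(m,k)$ in Section~\ref{sec:intro}). The proof is therefore a one-line deduction.
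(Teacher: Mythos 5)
Your proposal is correct and matches the paper's own argument exactly: the paper deduces this corollary directly from Theorem~\ref{thm:H_n(3,1)} (hyperbolicity of $H(n,3)$) together with Corollary~\ref{cor:conjprobcycpres}. Nothing further is needed.
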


\end{document}